\newtheorem{thm}{Theorem}
\newtheorem{lemma}[thm]{Lemma}
\newtheorem{cor}[thm]{Corollary}
\theoremstyle{remark}
\newcommand{\beq}{\begin{equation}}
\newcommand{\eeq}{\end{equation}}
\newcommand{\CC}{\mathbb{C}}
\newcommand{\re}{\mathrm{Re}}
\newcommand{\im}{\mathrm{Im}}
\title[Certified counting of roots of random univariate polynomials]{Certified counting of roots of random univariate polynomials}
\author[\mbox{ } \mbox{ } Cleveland, Dzugan, Hauenstein, Haywood, Mehta, Morse, Robol, Schlenk]{Joseph Cleveland, 
Jeffrey Dzugan, Jonathan D. Hauenstein, \\
Ian Haywood, Dhagash Mehta, Anthony Morse, \\ Leonardo Robol and Taylor Schlenk}
\address{Joseph Cleveland \\
Sam Houston State University
}
\email{jac087@shsu.edu}
\address{Jeffrey Dzugan \\
         Department of Mathematics\\
         Samford University}
\email{jdzugan@samford.edu }
\address{Jonathan D. Hauenstein \\
         Department of Applied and Computational Mathematics and Statistics\\
 University of Notre Dame\\
 Notre Dame, IN 46556\\
         USA}
\email{hauenstein@nd.edu}
\urladdr{\url{www.nd.edu/\~jhauenst}}
\address{Ian Haywood
Department of Mathematics\\
         North Carolina State University\\
         Raleigh\\
         North Carolina \ 27695\\
         USA
}
\email{ithaywoo@ncsu.edu}
\address{Dhagash Mehta \\
         Department of Applied and Computational Mathematics and Statistics\\
 University of Notre Dame\\
 Notre Dame, IN 46556\\
         USA}
\email{dmehta@nd.edu}
\urladdr{\url{www.nd.edu/\~dmehta}}
\address{Anthony Morse
Department of Mathematics\\
         State University of New York, Brockport\\
         USA
}
\email{anthonymorse92@gmail.com}
\address{Leonardo Robol
Scuola Normale Superiore, \\
Piazza dei Cavalieri, 7 - 56127 Pisa\\
Italy.
}
\email{leonardo.robol@sns.it}
\address{Taylor Schlenk
Department of Mathematics\\
         University of North Dakota
}
\email{taymarsch@hotmail.com}
\thanks{Research was supported by NSF grants DMS-1063010 and
DMS-1262428, NSA project number H98230-12-1-0299,
DARPA YFA, and Sloan Research Fellowship. 
The authors would like to thank Kambiz Farahmand, Antonio Lerario, 
Erik Lundberg and Yan Fyodorov for their feedback.}
\begin{document}
\begin{abstract}

A challenging problem in computational mathematics is to
compute roots of a high-degree univariate random polynomial.
We combine an efficient multiprecision implementation 
for solving high-degree random polynomials with two 
certification methods, namely Smale's $\alpha$-theory and one 
based on Gerschgorin's theorem, for showing 
that a given numerical approximation is in the quadratic 
convergence region of Newton's method of some exact solution. 
With this combination, we can {\it certifiably count}
the number of real roots of random polynomials.  We quantify the difference
between the two certification procedures and list the salient features of both of them. 
After benchmarking on random polynomials where the coefficients are drawn from the Gaussian distribution, 
we obtain novel experimental results for the~Cauchy~distribution~case.
\end{abstract}

\maketitle

\section{Introduction}
A random univariate polynomial is a univariate polynomial whose coefficients are picked from a random distribution.
Studying the roots of random polynomials is a classic problem in mathematics due to its numerous applications in many areas of mathematics 
and engineering, e.g., random matrix theory, hydrology, meteorology, aerodynamics, and structural engineering~\cite{Culling:88,Farahmand:98}. 
Other situations also lead to solving 
random polynomials including solving 
polynomials where coefficients are subject to random noise; 
the solutions of differential equations with random coefficients; 
problems in spectral theory of random matrices;
polynomial regression;
and the development computer algebra methods~\cite{Bharucha-Reid:86,Pan:97}.

Recently, more applications have emerged from theoretical physics, especially in statistical physics, 
e.g.,~\cite{fyodorov2012freezing,schehr2007statistics},
quantum mechanics, e.g.,~\cite{Bogomolny:92}, 
and string theory, e.g.,~\cite{Ashmore:2011yw,He:2010sf}. 
The roots of certain random polynomials are also related to minimizing the 
energy of charged particles on a sphere~\cite{Armentano:11}, i.e., Smale's 7th problem~\cite{Smale:98}.

Formally, a random polynomial is given as
\begin{equation}
 f(x) = \sum_{i=0}^{N} a_i x^i,\label{eq:univariate_random}
\end{equation}
where $N$ is the degree of the polynomial and the coefficients $a_{i}$ 
are random numbers picked from a distribution. 
The Fundamental Theorem of Algebra yields that the number of complex roots of $f$ is $N$, counting multiplicity.
Since writing the exact roots of $f$ as a general algebraic expression of the coefficients $a_i$
for $N\geq 5$ is not possible in general, due to the Abel-Ruffini theorem, we use 
numerical root-finding methods. 
There are numerous numerical methods available to find roots of univariate polynomials with floating point coefficients, e.g., 
companion matrices, the Aberth method, and the Jenkins-Traub method.  
All of these methods work well in certain cases
(see Ref.~\cite{McNamee:13} for an extensive review).
There are also several symbolic methods to solve polynomials 
with rational coefficients, but the random polynomials under consideration 
will generally not have rational coefficients.

Approximating the roots of univariate polynomials with moderate degrees starts becoming an increasingly difficult task.  However, in theory,
numerical approximation methods typically have complexity around $O(N^2)$,
but have the disadvantage of yielding only approximated results. 
This introduces the need to define what we actually consider
a ``good approximation'' of a polynomial root. ``Good'' numerical methods
often are backward-stable, that is, the results obtained at the end are
exact results of a slightly perturbed input. Unfortunately, for ill-conditioned
problems, even if they are posed in an exact setting, 
this might not be enough to provide bounds on the quality of the result. 

Smale defined ``good enough'' as being in the quadratic convergence zone
of Newton's method for an exact root of the polynomial 
so that the exact root can be efficiently 
approximated to arbitrary accuracy.  
A point in such a quadratic convergence zone
is called an {\it approximate root} and Smale showed how to certify using
only data computed at the test point that it was an indeed an approximate root
via $\alpha$-theory.
An implementation of $\alpha$-theory is provided in 
{\tt alphaCertified}~\cite{2010arXiv1011.1091H,alphaCertified}.
Other results of this kind also exist in the literature, e.g., \cite{renegar,tilli}.

Other certification techniques are available for the univariate case,
which, as we will see, can lead to better inclusion results. 
In this paper, we compare $\alpha$-theory with a
global approach used 
in the MPSolve package \cite{mpsolve,secsolve}. 
The MPSolve package implements an efficient multiprecision univariate polynomial solver
which our tests show is efficient and effective at solving 
polynomials even as the degree increases.

The main inclusion result, originally proved in the paper by Tilli \cite{tilli}, 
roughly states that if the distance of an approximation from the root is less 
than $s/N$, where $s$ is a constant depending on the separation of the roots, 
then Newton's method converges quadratically from the start. 
The constant $s$ can be easily computed or bounded 
without computing the roots by means
of Gerschgorin's theorem applied to a particular companion matrix. 
One may view this as a global method since $s$ depends on all of the roots
whereas $\alpha$-theory is a local method.  
Since we are focusing on random polynomials where, with probability $1$,
$f$ has $N$ distinct roots, the global root separation bound will be positive.
Thus, this global method is applicable for the problems under consideration.

The output of the software MPSolve is well-suited for this random context: 
it can deliver good approximations using multiprecision arithmetic
(given exact input, it obtains the roots to an arbitrary number of digits), can guarantee the inclusion radii, and can certify if a root is real. 
In this way, we were able to solve large samples of random polynomials and to certifiably count the number of real roots. 

For concrete problems, we work with two different sets of random polynomials: 
coefficients are independently picked from the Gaussian distribution with mean zero and variance one, 
and from the standard Cauchy distribution. 
Since the Gaussian case is well studied with many analytical results available,
we use this as our benchmark system. 
The Cauchy case is relatively under-studied and, except for the mean number of real roots, 
analytical results are usually not available. 
Hence, our numerical results for the Cauchy case can provide much needed input 
for potentially leading to new analytical results. 

The rest of the paper is organized as follows. 
Section~\ref{sec:Numerical_Methods} describes numerical methods under consideration for approximating
roots of univariate polynomials as well as discusses two certification methods.
We also pictorially show the certifiable regions of a simple and exactly solvable polynomial equation. 
Section~\ref{sec:numerical_set_up} describes our setup for solving and certifying the roots
of random polynomials.  Section~\ref{sec:results} highlights 
some differences between the two certification methods, 
especially as the degree increases,
and provides numerical results for various quantities associated
to random polynomials.

\section{Numerical Methods}\label{sec:Numerical_Methods}

One common approach for solving univariate polynomial equations is to use 
the companion matrix.  
This method is widely used in mathematical software such as {\sc Matlab}. 
After describing the companion matrix method, we summarize
Smale's $\alpha$-theory followed by the certification approach
based on Gerschgorin's theorem used in MPSolve.  
The global nature of certification based on Gerschgorin's theorem 
generally provides larger certification regions due to exploiting more information
about the roots than the local $\alpha$-theory approach.
This section ends with certification methods for detecting real roots.

\subsection{The Companion Matrix Method}

Suppose that for $f$ presented in \eqref{eq:univariate_random}, 
the leading coefficient $a_N$ is nonzero, i.e., $f$
is a univariate polynomial of degree~$N$.  
One can find the roots of 
$f$ by computing the eigenvalues of the companion matrix:
\begin{equation}
C_f := \left( \begin{array}{ccccc}
 0 & 0 & 0 & 0 & -a_{0}/a_{N} \\
  1 & 0 & 0 & 0 & -a_{1}/a_{N} \\
 0 & 1 & 0 & 0 & -a_{2}/a_{N} \\ 
  0 & 0 & 1 & 0 & . \\
  . & . & . & . & . \\
  . & . & . & . & . \\
  0 & 0 & \dots & 1 & -a_{N-1}/a_{N} \\
 \end{array} \right),
\end{equation}
since $f$ is the characteristic polynomial of $C_f$.
The eigenvalues of $C_f$ can be computed, for example, using
the QR algorithm as implemented in LAPACK \cite{anderson1999lapack}. 
The {\tt roots()} command in {\sc Matlab} uses this companion matrix method.

\subsection{Smale's $\alpha$-Theorem}

Due to conditioning of $C_f$, the approximations of the roots of $f$ 
computed by, for example, LAPACK or {\sc Matlab}, using double precision
may not be in a quadratic convergence zone of Newton's method.  
One way to certify they are indeed in a quadratic convergence zone 
is to use $\alpha$-theory.
Although the approach works for more general situations, we focus
on univariate polynomials $f$ of degree $N$.
Let $f'$ be the first derivative of $f$ and $f^{(k)}$ be
the $k^{\rm th}$ derivative.  
A Newton iteration of $f$ starting at $x^*\in\CC$ is defined by
\begin{equation*}
N_{f}(x^*):=\begin{cases}
x^*-f(x^*)/f'(x^*) & \mbox{if }f'(x^*)\neq0,\\
 x^* & \mbox{otherwise.}
\end{cases}
\end{equation*}
Clearly, if $f'(x^*)\neq0$, then $N_f(x^*) = x^*$ if and only if $f(x^*) = 0$.

Newton's method relies upon repeatedly applying Newton iterations.  
For simplicity, the $k^{\rm th}$ Newton iteration will be denoted
\begin{equation*}
N_{f}^{k}(x):=\underbrace{N_{f}\circ\dots\circ N_{f}}_{\hbox{$k$ times}}(x).
\end{equation*}

After defining the notion of an approximate root, we state a theorem
which can be used to certify that a point is indeed an approximate root
without {\em a priori} knowledge about the exact root.
A point $x^*\in\CC$ is an \emph{approximate root} of $f$ 
with associated root $\zeta$ if, for each $k\geq1$,
\begin{equation}
\left|N_{f}^{k}(x^*)-\zeta\right|\leq\left(\frac{1}{2}\right)^{2^{k}-1}\left|x^*-\zeta\right|.
\end{equation}
That is, $f(\zeta) = 0$ and $x^*$ is in the quadratic convergence basin of
$\zeta$ for $f$.

\begin{thm}[\cite{BCSS}, pg. 160] \label{thm:alpha}
If $x^*\in\CC$ and $f$ is a univariate polynomial of degree~$N$ such that $f'(x^*)\neq0$ and
$$\alpha(f,x^*) < \frac{13-3\sqrt{17}}{4},$$
then $x^*$ is an approximate root of $f$ 
with associated root $\zeta$ where 
\begin{equation}
\begin{array}{cl}
\alpha(f,x):= & \beta(f,x)\cdot \gamma(f,x),\\
\noalign{\smallskip}
\beta(f,x):= & |f(x)/f'(x)|, \quad\ \mbox{and}\\
\noalign{\smallskip}
\gamma(f,x):= & \underset{2\leq k\leq N}{\max}
\left|\frac{\displaystyle f^{(k)}(x)}{\displaystyle k!\cdot f'(x)}\right|^{1/(k-1)}.
\end{array}
\end{equation}
Moreover, $|x^* - \zeta| <2\beta(f,x^*)$.
\end{thm}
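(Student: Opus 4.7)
\medskip

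\noindent\textbf{Proof proposal.}
The plan is to follow the standard argument of Blum--Cucker--Shub--Smale, whose engine is a pair of Taylor estimates that control how the quantities $\beta$ and $\gamma$ transform under a single Newton iteration. Throughout, fix $x^*$ and write $u = \gamma(f,x^*)\cdot|x-x^*|$ as the natural dimensionless parameter. The first step is to bound $f^{(k)}$ at a nearby point $x$ in terms of its value at $x^*$ using the Taylor series and the definition of $\gamma$; the resulting estimate is the geometric--series bound $|f^{(k)}(x)/f^{(k)}(x^*)|\le$ some explicit rational function of $u$, valid whenever $u<1$. From this one deduces in particular that $f'(x)\neq 0$ on a small disk around $x^*$, so Newton iterations are well defined.

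Next I would extract the two key one-step inequalities: (i) $\gamma(f,N_f(x^*))\le \gamma(f,x^*)/\psi(u)$ for a suitable function $\psi$ vanishing linearly at $u=0$, and (ii) $\beta(f,N_f(x^*))\le \beta(f,x^*)\cdot \varphi(u)$ where $\varphi(u)=O(u)$. Multiplying these gives a single quadratic-type recursion
\beq
\alpha(f,N_f(x^*))\le \alpha(f,x^*)\cdot \Phi\!\left(\alpha(f,x^*)\right)
\eeq
for an explicit rational $\Phi$. The critical constant $\alpha_0 = (13-3\sqrt{17})/4$ then appears as the largest $\alpha\ge 0$ for which the iterated map on $\alpha$ values remains a strict contraction with quadratic rate; algebraically it is the relevant root of the quadratic $2\alpha^2 - 13\alpha + 4 = 0$ that arises after clearing denominators in $\Phi$.

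Assuming $\alpha(f,x^*)<\alpha_0$, induction on $k$ then gives the sequences $\alpha_k := \alpha(f,N_f^k(x^*))$ and $\beta_k := \beta(f,N_f^k(x^*))$ with $\alpha_k\le (1/2)^{2^k-1}\alpha_0$ and consequently $\beta_{k+1}\le \tfrac12 \beta_k$, indeed $\beta_{k+1}\le (1/2)^{2^{k+1}-1}\beta_0$. Because $|N_f^{k+1}(x^*)-N_f^k(x^*)|=\beta_k$, the iterates form a Cauchy sequence whose limit $\zeta$ is a fixed point of $N_f$, hence a root of $f$, and telescoping gives
\beq
|N_f^k(x^*)-\zeta|\le \sum_{j\ge k}\beta_j \le 2\beta_k \le (1/2)^{2^k-1}\cdot 2\beta_0.
\eeq
Taking $k=0$ yields the final bound $|x^*-\zeta|<2\beta(f,x^*)$, and comparing with $|x^*-\zeta|\le 2\beta_0$ on the right-hand side of the telescope gives the approximate-root inequality in the exact form $|N_f^k(x^*)-\zeta|\le (1/2)^{2^k-1}|x^*-\zeta|$ after a short bookkeeping step.

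The main obstacle is getting the constants right: steps (i) and (ii) hinge on the sharp Taylor-coefficient estimates that give explicit $\psi$ and $\varphi$, and one must show that the composed contraction factor $\Phi$ actually satisfies $\alpha\cdot\Phi(\alpha)\le \alpha/2$ precisely on $[0,\alpha_0)$. This is a single-variable calculus computation, but extracting the algebraic number $(13-3\sqrt{17})/4$ rather than a looser numerical bound is the delicate part; the rest of the argument is then essentially a geometric-series bookkeeping that follows automatically from the quadratic decay of $\alpha_k$.
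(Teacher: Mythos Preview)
The paper does not prove this theorem; it is stated with a citation to \cite{BCSS}, p.~160, and no argument is given. Your outline is essentially the BCSS proof (Taylor bounds on $f^{(k)}$, one-step recursions for $\beta$ and $\gamma$, a contraction inequality for $\alpha$, then a Cauchy-sequence/telescoping argument), so there is nothing in the paper to compare against beyond the reference itself.

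Two small remarks on correctness. First, the quadratic you write is off by a constant: the number $(13-3\sqrt{17})/4$ is a root of $2\alpha^2-13\alpha+2=0$, not $2\alpha^2-13\alpha+4=0$. Second, the ``short bookkeeping step'' at the end is not quite as automatic as you suggest: from the telescope you obtain $|N_f^k(x^*)-\zeta|\le (1/2)^{2^k-1}\cdot 2\beta_0$, whereas the definition of approximate root demands the right-hand side be $(1/2)^{2^k-1}|x^*-\zeta|$, and in general $2\beta_0\ge |x^*-\zeta|$, which is the wrong inequality. In the BCSS argument this is handled by tracking the distances $|N_f^k(x^*)-\zeta|$ directly (via a recursion of the form $|N_f(x)-\zeta|\le c(u)\,|x-\zeta|$ with $c(u)<1/2$ for $u$ below the threshold), rather than by first bounding $\beta_k$ and then trying to convert. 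Your overall strategy is right, but that final step needs the distance-based recursion rather than the $\beta$-telescope.
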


Given two approximate roots $x_1$ and $x_2$, they must have distinct associated roots, via
the triangle inequality, if
$$|x_1 - x_2| > 2(\beta(f,x_1) + \beta(f,x_2)).$$
This and several other $\alpha$-theoretic algorithms are implemented in the software package
{\tt alphaCertified} \cite{2010arXiv1011.1091H,alphaCertified} which,
among other things, has been recently used to certify solutions to multivariate 
problems arising in theoretical physics and chemistry \cite{Mehta:2013zia,Mehta:2014gia}.

In the present article, we take advantage of the fact that 
random polynomials of degree $N$ have, with probability $1$, 
$N$ distinct complex roots. 
This means that if we find $N$ approximate roots with distinct associated roots, 
we have accounted for \textit{all} of the roots.

\subsection{Approximate Roots using MPSolve} \label{subsec:mpsolve}

Another way to certify a point is an approximate root of a univariate polynomial
is the following global approach.

\begin{thm}[\cite{tilli}] \label{thm:newtonisolation}
Let $f$ be a monic univariate polynomial of degree $N\geq4$ with 
distinct roots $\zeta_1,\dots,\zeta_N$, i.e., $f(x) = \prod_{i=1}^N (x-\zeta_i)$
with $\zeta_i\neq\zeta_j$ for $i\neq j$. 
Suppose that $x^*\in\CC$ and $1\leq i\leq N$ such that 
$$|x^* - \zeta_i| \leq \frac{|x^* - \zeta_j|}{3(N-1)} \hbox{~~for all~} j \neq i.$$
Then, $x^*$ is an approximate root of $f$ with associated root $\zeta_i$.
\end{thm}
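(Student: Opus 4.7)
My plan is to translate the Newton iteration into a transparent identity via the factored form of $f$ and then push quadratic convergence through by induction. Writing $f(x) = \prod_{j=1}^N (x-\zeta_j)$ and, for the target root $\zeta_i$, setting $\delta = x - \zeta_i$ and $\sigma(x) = \sum_{j \neq i} 1/(x - \zeta_j)$, the partial fraction $f'(x)/f(x) = 1/\delta + \sigma(x)$ gives, after a short manipulation, the key identity
\[
N_f(x) - \zeta_i \;=\; \frac{\delta^2\, \sigma(x)}{1 + \delta\, \sigma(x)}.
\]
This exhibits the Newton map as genuinely quadratic in the displacement $\delta$ and reduces the entire problem to controlling $|\sigma|$.

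Set $x_k = N_f^k(x^*)$ and $r_k = |x_k - \zeta_i|$, with $r_0 = |x^* - \zeta_i|$. For the base case $k = 0$ the hypothesis yields $|\sigma(x^*)| \leq (N-1)/(3(N-1)r_0) = 1/(3 r_0)$, hence $|\delta\,\sigma| \leq 1/3$, and the identity immediately delivers $r_1 \leq r_0/2$. For $k \geq 1$ I exploit the triangle inequality $|\zeta_i - \zeta_j| \geq |x^* - \zeta_j| - r_0 \geq (3N-4)\, r_0$ to obtain $|x_k - \zeta_j| \geq (3N-4)r_0 - r_k$ and hence $|\sigma(x_k)| \leq (N-1)/((3N-4) r_0 - r_k)$. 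A short algebraic manipulation then turns the identity into the quadratic recursion
\[
r_{k+1} \;\leq\; \frac{r_k^2}{2\, r_0},
\]
valid whenever the auxiliary invariant $r_k \leq (1-2/N)\, r_0$ holds. A trivial induction on this recursion then yields $r_k \leq (1/2)^{2^k - 1} r_0$ for all $k \geq 0$, which is exactly Smale's definition of an approximate root with associated root $\zeta_i$.

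The only delicate point, and the place where the hypothesis $N \geq 4$ enters, is propagating the invariant $r_k \leq (1-2/N) r_0$ along the iteration. For $N \geq 4$ this reads $r_k \leq r_0/2$, which is precisely the conclusion of the base case and is preserved by the recursion since $(r_0/2)^2/(2r_0) = r_0/8 \leq r_0/2$. The constants $3(N-1)$ in the hypothesis and $3N-4$ in the separation estimate are tuned precisely so that this invariant closes and the rate $(1/2)^{2^k-1}$ comes out sharply; aside from this bookkeeping, every remaining estimate is immediate from the single identity above.
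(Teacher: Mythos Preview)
The paper does not actually prove this theorem; it is quoted from Tilli \cite{tilli} without proof, so there is no in-paper argument to compare against. That said, your argument is correct and self-contained. The identity $N_f(x)-\zeta_i = \delta^2\sigma(x)/(1+\delta\sigma(x))$ is exactly right, your base-case bound $|\sigma(x^*)|\leq 1/(3r_0)$ giving $r_1\leq r_0/2$ is clean, and the algebra showing that $r_{k+1}\leq r_k^2/(2r_0)$ holds whenever $r_k\leq(1-2/N)r_0$ checks out: the inequality $(N-1)(2r_0+r_k)\leq(3N-4)r_0-r_k$ rearranges precisely to $Nr_k\leq(N-2)r_0$. The induction then closes because for $N\geq4$ the maintained bound $r_k\leq r_0/2$ implies $r_k\leq(1-2/N)r_0$, and the recursion $\rho_{k+1}\leq\rho_k^2/2$ with $\rho_1\leq1/2$ yields $\rho_k\leq(1/2)^{2^k-1}$ by the standard doubling argument.

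Two small remarks. First, your sentence ``For $N\geq4$ this reads $r_k\leq r_0/2$'' is slightly imprecise: equality holds only at $N=4$, and for larger $N$ the invariant $r_k\leq(1-2/N)r_0$ is weaker; what you are really using is that $r_k\leq r_0/2$ \emph{implies} the invariant for all $N\geq4$. Second, you should note explicitly that $f'(x_k)\neq0$ along the iteration (so that $N_f$ is defined): this follows from $|\delta_k\sigma(x_k)|<1$, which you have already established, since $f'/f=1/\delta+\sigma$ cannot vanish when $|\delta\sigma|<1$. With those cosmetic fixes the proof is complete.
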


This theorem involves computing the distance from a point to 
the {\em a priori} unknown
roots of $f$.  The following lemma provides an effective bound.

\begin{lemma} \label{lem:newtonradius}
If $f$ is a monic univariate polynomial of degree $N$
and $x^*\in\CC$ such that $f'(x^*)\neq0$, 
then $\{z\in\CC~|~|x^* - z| \leq N\cdot\beta(f,x^*)\}$ contains at least~one~root~of~$f$.
\end{lemma}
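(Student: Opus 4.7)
The plan is to exploit the logarithmic-derivative identity for a monic polynomial, since $f$ being monic lets me factor it as $f(x)=\prod_{i=1}^{N}(x-\zeta_i)$ over its roots $\zeta_1,\dots,\zeta_N$ (counted with multiplicity).

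First I would dispense with the trivial case $f(x^*)=0$: then $\beta(f,x^*)=0$, the disk in question is $\{x^*\}$, and $x^*$ is itself a root, so the statement holds. Assume henceforth $f(x^*)\neq 0$. Then, using the factorization, compute
\[
\frac{f'(x^*)}{f(x^*)}=\sum_{i=1}^{N}\frac{1}{x^*-\zeta_i},
\]
which is well defined because neither $f(x^*)$ nor $f'(x^*)$ vanishes and no $x^*-\zeta_i$ can vanish.

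Next, apply the triangle inequality and bound each term by the largest one to get
\[
\left|\frac{f'(x^*)}{f(x^*)}\right|\ \leq\ \sum_{i=1}^{N}\frac{1}{|x^*-\zeta_i|}\ \leq\ \frac{N}{\min_{1\leq i\leq N}|x^*-\zeta_i|}.
\]
Rearranging this inequality yields
\[
\min_{1\leq i\leq N}|x^*-\zeta_i|\ \leq\ N\cdot\frac{|f(x^*)|}{|f'(x^*)|}\ =\ N\cdot\beta(f,x^*),
\]
so the minimizing root lies in the claimed disk.

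No step here is really an obstacle; the only subtlety is making sure the denominators in the logarithmic-derivative expansion are nonzero, which is what motivates separating off the $f(x^*)=0$ case at the start. The proof uses monicity only implicitly, through the clean factorization $f(x)=\prod(x-\zeta_i)$ with no leading factor; for a non-monic polynomial the same bound would go through unchanged since the leading coefficient cancels between $f$ and $f'$ in the ratio $f'/f$.
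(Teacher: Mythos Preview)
Your argument is correct and is precisely the classical proof of this inclusion result; the paper itself does not give an argument but simply cites Henrici's textbook, where exactly this logarithmic-derivative computation appears. So your proposal supplies the details that the paper defers to the reference, and there is nothing to add.
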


\begin{proof}
  See \cite{henrici}.
\end{proof}

The following combines Theorem~\ref{thm:newtonisolation} and Lemma~\ref{lem:newtonradius} 
to allow one to algorithmically verify that a given point is an approximate root
given information about all of the roots.  
This type of global approach is used in the MPSolve package \cite{mpsolve,secsolve}.

\begin{cor}
Let $f$ be a monic univariate polynomial of degree $N\geq4$ with 
$N$ distinct roots and suppose that $x_1,\dots,x_N\in\CC$.
If, for all $i\neq j$,
\begin{equation}\label{eq:distance}
|x_i - x_j| > N(3N-1) (\beta(f,x_i) + \beta(f,x_j))
\end{equation}
then each $x_i$ is an approximate root of $f$ with distinct associated roots.
\end{cor}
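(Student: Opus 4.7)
The plan is to combine the two preceding results in the obvious way: use Lemma~\ref{lem:newtonradius} to assign a nearby exact root to each candidate $x_i$, and then use Theorem~\ref{thm:newtonisolation} to promote that assignment to ``approximate root'' status. The only real work is to check that the constant $N(3N-1)$ in the hypothesis \eqref{eq:distance} is large enough to feed Theorem~\ref{thm:newtonisolation}.

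Concretely, I would proceed as follows. First, for each $i$, apply Lemma~\ref{lem:newtonradius} to produce a root $\zeta_{\pi(i)}$ of $f$ with $|x_i - \zeta_{\pi(i)}| \leq N\,\beta(f,x_i)$. This defines a map $\pi\colon\{1,\dots,N\}\to\{1,\dots,N\}$. Second, I would argue $\pi$ is a bijection: if $\pi(i)=\pi(j)$ with $i\neq j$, then the triangle inequality gives $|x_i-x_j|\leq N(\beta(f,x_i)+\beta(f,x_j))$, contradicting \eqref{eq:distance} since $N(3N-1)>N$. Being an injective self-map of a finite set, $\pi$ is a permutation, so the roots $\zeta_{\pi(i)}$ are pairwise distinct.

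Third, I would verify the isolation hypothesis of Theorem~\ref{thm:newtonisolation}: fix $i$ and any $m\neq \pi(i)$, and write $m=\pi(j)$ for the unique $j\neq i$. Then
\[
|x_i-\zeta_m| \;\geq\; |x_i-x_j| - |x_j-\zeta_{\pi(j)}| \;>\; N(3N-1)\bigl(\beta(f,x_i)+\beta(f,x_j)\bigr) - N\beta(f,x_j),
\]
and a short arithmetic check shows that this lower bound is at least $3(N-1)\cdot N\beta(f,x_i) \geq 3(N-1)|x_i-\zeta_{\pi(i)}|$; the key inequality is $N(3N-1) \geq 3N(N-1)$ on the $\beta(f,x_i)$ term and $N(3N-1)\geq N$ on the $\beta(f,x_j)$ term, both of which hold trivially. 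Therefore Theorem~\ref{thm:newtonisolation} applies and $x_i$ is an approximate root with associated root $\zeta_{\pi(i)}$.

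The only conceivable obstacle is bookkeeping in the second step, namely checking that the constants really line up so that the single hypothesis \eqref{eq:distance} simultaneously controls both the $\beta(f,x_i)$ contribution (from the inclusion disk around the target root $\zeta_{\pi(i)}$) and the $\beta(f,x_j)$ contribution (from the inclusion disk around the competitor root $\zeta_{\pi(j)}$). But $N(3N-1)=3N(N-1)+2N$ leaves a safe margin on both sides, so everything fits with room to spare, and no sharper estimate on $\pi$ or on the individual distances is needed.
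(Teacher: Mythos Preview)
Your proposal is correct and follows essentially the same route as the paper: use Lemma~\ref{lem:newtonradius} to place a root in each Newton ball, observe that the balls are disjoint so the assigned roots are distinct, and then verify the separation hypothesis of Theorem~\ref{thm:newtonisolation} via the triangle inequality $|x_i-\zeta_j|\geq|x_i-x_j|-|x_j-\zeta_j|$. The only cosmetic difference is that you phrase distinctness via an injective map $\pi$ and drop the $\beta(f,x_j)$ contribution in the final bound, whereas the paper keeps both terms to obtain $|x_i-\zeta_j|/3(N-1)\geq N(\beta(f,x_i)+\beta(f,x_j))$; either estimate suffices.
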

\begin{proof}
By Lemma~\ref{lem:newtonradius}, the ball $B_i := B(x_i,N\cdot\beta(f,x_i))$
centered at $x_i$ of radius $N\cdot\beta(f,x_i)$ contains at least one root.
For $i\neq j$, it follows immediately from \eqref{eq:distance}
that $B_i\cap B_j = \emptyset$.
Thus, each $B_i$ contains a distinct root of $f$, say $\zeta_i$.
For $i\neq j$,
$$\begin{array}{lcl}
|x_i - \zeta_j| &\geq& |x_i - x_j| - |x_j - \zeta_j| \\
                & >  & N(3N-2)(\beta(f,x_i) + \beta(f,x_j)) - N\beta(f,x_i) \\
                &\geq& 3N(N-1)(\beta(f,x_i) + \beta(f,x_j)).
\end{array}$$
Thus, for all $i\neq j$,
$$\frac{|x_i - \zeta_j|}{3(N-1)}\geq N(\beta(f,x_i)+\beta(f,x_j))\geq |x_i - \zeta_i|$$
and the statement follows from Theorem~\ref{thm:newtonisolation}.
\end{proof}

The balls $B(x,N\cdot\beta(f,x))$ provides some information about the roots.  
Gerschgorin's theorem is another way to obtain strict and useful inclusion 
results to analyze clusters of roots, such as the following statement.

\begin{thm}
Let $A=(a_{ij})$ be a $N \times N$ matrix with $a_{ij}\in\CC$. 
Let $r_i := \sum_{j \neq i} |a_{ij}|$
and $B_i := B(a_{ii},r_i)$ be the ball centered at $a_{ii}$ with radius $r_i$. 
Then, 
  \begin{enumerate}
    \item each eigenvalue of $A$ is contained in $\bigcup_{i=1}^N B_i$, and
    \item every connected component of this union made up of $k$ balls contains exactly
      $k$ eigenvalues of $A$ (counted with multiplicity).   
  \end{enumerate}
\end{thm}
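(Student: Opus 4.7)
My plan is to handle the two parts by separate standard techniques: a direct eigenvector argument for part (1) and a homotopy/continuity argument for part (2).

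For part (1), let $\lambda$ be an eigenvalue of $A$ with eigenvector $v \neq 0$, and choose an index $i$ that maximizes $|v_i|$; note $|v_i| > 0$. Reading off the $i$-th coordinate of $Av = \lambda v$ gives $(\lambda - a_{ii}) v_i = \sum_{j \neq i} a_{ij} v_j$. Dividing by $v_i$, taking absolute values, and using $|v_j/v_i| \leq 1$ immediately yields $|\lambda - a_{ii}| \leq \sum_{j \neq i} |a_{ij}| = r_i$, so $\lambda \in B_i$. This is the standard short argument and presents no real obstacle.

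For part (2), my plan is to interpolate between the diagonal part and the full matrix. Let $D = \mathrm{diag}(a_{11},\dots,a_{NN})$ and define $A(t) := D + t(A - D)$ for $t \in [0,1]$, so $A(0) = D$ and $A(1) = A$. The Gerschgorin balls of $A(t)$ are $B_i(t) := B(a_{ii}, t r_i)$, which have the same centers as the $B_i$ but radii scaled by $t$. Suppose $\mathcal{C}$ is a connected component of $\bigcup_i B_i$ consisting of exactly the balls $\{B_i : i \in I\}$ for some $I \subseteq \{1,\dots,N\}$. The key geometric observation is that since $B_i(t) \subseteq B_i$ for every $t \in [0,1]$, the union $U(t) := \bigcup_{i \in I} B_i(t)$ is contained in $\mathcal{C}$ for every $t$, while $V(t) := \bigcup_{j \notin I} B_j(t)$ is contained in the complement of $\mathcal{C}$; in particular $U(t)$ and $V(t)$ are disjoint closed sets for every $t \in [0,1]$.

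Now I would use continuity of eigenvalues: the $N$ eigenvalues of $A(t)$, counted with multiplicity, are the roots of the characteristic polynomial $\det(\lambda I - A(t))$, whose coefficients depend polynomially on $t$, hence the roots depend continuously on $t$ as an unordered multiset. By part (1) applied to $A(t)$, each eigenvalue of $A(t)$ lies in $U(t) \cup V(t)$. Define $n(t)$ to be the number of eigenvalues of $A(t)$ (with multiplicity) lying in $U(t)$. Since $U(t)$ and $V(t)$ stay disjoint throughout the deformation, an eigenvalue cannot migrate from one to the other without passing through their complement, which contains no eigenvalues of any $A(t)$; this forces $n(t)$ to be locally constant, and then constant, on $[0,1]$. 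At $t = 0$, $A(0) = D$ has eigenvalues $\{a_{11},\dots,a_{NN}\}$ with multiplicity, and $U(0) = \{a_{ii} : i \in I\}$, so $n(0) = |I| = k$. Therefore $n(1) = k$, which says exactly that $\mathcal{C}$ contains $k$ eigenvalues of $A$ counted with multiplicity.

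The main obstacle is making the ``locally constant'' step fully rigorous, since continuity of roots is as an unordered multiset rather than as individually labelled curves. The cleanest way I would justify it is to note that $U(t)$ is open (or use its closure) and that its boundary $\partial U(t)$ stays uniformly bounded away from $V(t)$, then invoke the fact that for a polynomial with continuously varying coefficients whose roots never touch a fixed curve, the number of roots inside the curve is constant; applied on short time intervals via a standard compactness/covering argument, this gives $n$ constant on $[0,1]$.
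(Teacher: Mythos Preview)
The paper does not actually prove this statement: it is quoted as the classical Gerschgorin circle theorem and used as a tool, so there is no ``paper's own proof'' to compare against. Your argument is the standard textbook proof and is correct in both parts.

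One small remark on your closing paragraph: the cleanest way to make the ``locally constant'' step airtight is exactly the contour idea you hint at, but with a \emph{fixed} contour rather than one that varies with $t$. Since $\mathcal{C}$ is compact and disjoint from the closed set $\bigcup_{j\notin I} B_j$, you can choose a cycle $\Gamma$ in the gap between them. For every $t\in[0,1]$ the eigenvalues of $A(t)$ lie in $\bigcup_i B_i(t)\subseteq \bigcup_i B_i$, hence never on $\Gamma$, so
\[
n(t)=\frac{1}{2\pi i}\int_{\Gamma}\frac{\partial_\lambda \det(\lambda I - A(t))}{\det(\lambda I - A(t))}\,d\lambda
\]
is a continuous integer-valued function of $t$, hence constant. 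This avoids any appeal to labelling of roots or to compactness/covering arguments.
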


Suppose that $f$ is a monic univariate polynomial and
$x_1,\dots,x_N\in\CC$ are distinct.
Define $w_i := - f(x_i) \prod_{j\neq i}(x_i - x_j)^{-1}$.
Consider the following matrix:
\[
  A := \left[\begin{array}{cccc} w_1 + x_1 & w_2 & \cdots & w_N \\ w_1 & w_2 + x_2 & \cdots & w_N \\ \vdots & \vdots & \ddots & \vdots \\ w_1 & w_2 & \cdots & w_N + x_N\end{array}\right]
\]

\begin{lemma} The matrix $A$ is a companion matrix for $f$, i.e., 
$\det\left( A - xI \right) = f(x)$. 
\end{lemma}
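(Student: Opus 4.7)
The key structural observation is that $A$ is a rank-one perturbation of a diagonal matrix. Writing $D = \mathrm{diag}(x_1,\dots,x_N)$, $\mathbf{1} = (1,\dots,1)^T$, and $\mathbf{w} = (w_1,\dots,w_N)^T$, one checks directly from the entries that $A = D + \mathbf{1}\mathbf{w}^T$. Consequently,
\[
A - xI = (D - xI) + \mathbf{1}\mathbf{w}^T,
\]
a rank-one update of the diagonal matrix $D - xI$. The plan is to compute this determinant via the matrix determinant lemma and then recognize the result as the Lagrange interpolation expansion of $f$.

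First, I would treat $x$ as an indeterminate distinct from every $x_i$, so that $D - xI$ is invertible; by continuity (both sides are polynomials in $x$), the identity then extends to all $x$. The matrix determinant lemma yields
\[
\det(A - xI) = \det(D - xI)\bigl(1 + \mathbf{w}^T (D - xI)^{-1} \mathbf{1}\bigr) = \prod_{i=1}^N (x_i - x) \cdot \Bigl(1 + \sum_{i=1}^N \frac{w_i}{x_i - x}\Bigr).
\]
Distributing and substituting $w_i = -f(x_i)\prod_{j\neq i}(x_i - x_j)^{-1}$ gives
\[
\det(A - xI) = \prod_{i=1}^N (x_i - x) - \sum_{i=1}^N f(x_i) \prod_{j\neq i} \frac{x_j - x}{x_i - x_j}.
\]

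Second, I would reconcile this with $f(x)$ via Lagrange interpolation. Since $f$ is monic of degree $N$, the polynomial $f(x) - \prod_i (x - x_i)$ has degree at most $N-1$ and takes the value $f(x_i)$ at each node $x_i$ (because $\prod_j (x_i - x_j)$ contains the factor $0$). The Lagrange interpolation formula at the nodes $x_1,\dots,x_N$ therefore gives
\[
f(x) = \prod_{i=1}^N (x - x_i) + \sum_{i=1}^N f(x_i) \prod_{j\neq i} \frac{x - x_j}{x_i - x_j}.
\]
Pulling out the global sign factor $(-1)^N$ (from $x_i - x = -(x - x_i)$ and from the $N-1$ sign flips in each Lagrange basis product), the expression for $\det(A - xI)$ coincides with $\pm f(x)$, matching the lemma's statement up to the customary sign convention on the characteristic polynomial.

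The only genuine step is invoking the matrix determinant lemma on $D + \mathbf{1}\mathbf{w}^T$ and matching the distinct bookkeeping; the sign tracking between $\det(A - xI)$ and $\det(xI - A)$ is the only subtle point, and it is handled cleanly by the $(-1)^N$ factor observed above. The main conceptual insight is simply that the diagonal plus rank-one structure of $A$ is precisely engineered so that the matrix determinant lemma reproduces the Lagrange interpolation formula, which in turn reconstructs the monic polynomial $f$ from the data $(x_i, f(x_i))$.
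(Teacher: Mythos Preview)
Your proof is correct and follows essentially the same route as the paper: both recognize $A = D + \mathbf{1}\mathbf{w}^T$ as a rank-one perturbation of a diagonal matrix, apply the matrix determinant lemma to compute the characteristic polynomial, and then identify the resulting expression with $f$ via Lagrange interpolation (the paper does this last step by evaluating at the nodes $x_i$ rather than citing the interpolation formula by name, but the content is the same). Your explicit attention to the $(-1)^N$ sign between $\det(A-xI)$ and $\det(xI-A)$ is appropriate, since the paper's proof in fact establishes $\det(xI-A)=f(x)$ while the lemma as stated asserts $\det(A-xI)=f(x)$.
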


\begin{proof}
  Let $D = \mathrm{diag}(x_1, \dots, x_N)$, $e = [ 1, \dots, 1 ]^T$ and 
  $w = [ w_1, \dots, w_N ]^T$. Recall that $\det(I + uv^T) = 1 + v^Tu$. 
  We have then that $A = D + ew^T$ so we can write:
  \begin{align*}
    \det\left( xI - A \right) & = \det\left( (xI-D) ( I - (xI-D)^{-1} ew^T ) \right) \\
      & = \prod_{i = 1}^N (x - x_i) \cdot \left( 1 - \sum_{i = 1}^N \frac{w_i}{x - x_i} \right) \\
      & = \prod_{i = 1}^N (x - x_i) + \sum_{i = 1}^N f(x_i) \prod_{j \neq i} \left( \frac{x - x_j}{x_i - x_j} \right) =: q(x). 
  \end{align*}
  The expression above proves that we have $q(x) = \det(xI-A)$ for every $x \neq x_i$. 
  Recalling that $\det(xI-A)$ is a degree $N$ polynomial we can conclude that the relation
  must hold for every $x \in \mathbb{C}$. 
In particular, for every $i = 1, \dots, N$,
we know $q(x_i) = f(x_i)$ and so we can conclude that $q \equiv f$. 
\end{proof}

Since we have that $A$ is a companion matrix for $f$, we can state the following.

\begin{cor} \label{cor:gerschgorin}
  Let $x_1, \dots, x_N\in\CC$ be distinct and let $w_i$ be the values defined above. The union of the balls $B_i = B(x_i, N\cdot\lvert w_i \rvert)$ contains all the roots of $f(x)$. Moreover, every connected component 
  of this union made up of $k$ balls contains exactly $k$ roots. 
\end{cor}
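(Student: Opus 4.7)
The plan is to apply Gerschgorin's theorem to $A^T$, which has the same eigenvalues as $A$ and hence (by the preceding lemma) the same spectrum as the root set of $f$. The key observation is that while the rows of $A$ have rather unhelpful off-diagonal sums $\sum_{j\ne i}|w_j|$, the columns of $A$ are particularly clean: every off-diagonal entry of column $i$ equals $w_i$, for a total of $(N-1)|w_i|$. Applying the row form of Gerschgorin to $A^T$ (equivalently, the column form to $A$), I obtain that every eigenvalue lies in some ball $B'_i := B\bigl(w_i+x_i,\,(N-1)|w_i|\bigr)$. Since the center $w_i+x_i$ is at distance $|w_i|$ from $x_i$, the triangle inequality yields $B'_i \subseteq B(x_i,\,N|w_i|)=B_i$, which gives the first assertion.

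For the counting claim I would invoke the second part of Gerschgorin's theorem applied to $A^T$: each connected component of $\bigcup_i B'_i$ consisting of $k$ balls contains exactly $k$ eigenvalues, counted with multiplicity. It then remains to transfer this count from the $B'_i$ to the enlarged balls $B_i$. Because $B'_i\subseteq B_i$ for every $i$, whenever two enlarged balls $B_i,B_j$ are disjoint, so are $B'_i,B'_j$; consequently, every connected component of $\bigcup_i B'_i$ is contained in a unique connected component of $\bigcup_i B_i$. Thus if $\bigcup_{i\in S} B_i$ is a connected component with $|S|=k$, the index set $S$ partitions into the index sets of the $B'$-components it contains, and summing the Gerschgorin counts from those subcomponents gives exactly $k$ eigenvalues in our enlarged component. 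Since the total over all enlarged components is $N$, no eigenvalues can be missing.

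The main obstacle I anticipate is purely the bookkeeping in this connected-components step: one must verify that enlarging the balls may merge several $B'$-components but cannot merge $B'$-components that were assigned to distinct enlarged components, so that the per-component counts still sum correctly. One additional nuance is the degenerate case $w_i=0$, but this immediately forces $f(x_i)=0$, so $x_i$ is already a root, and the collapsed ball $B_i=\{x_i\}$ trivially contains it; Gerschgorin applies verbatim, so no special treatment is needed. Beyond these points, the argument is a routine combination of the triangle inequality and the standard homotopy proof of the second part of Gerschgorin's theorem.
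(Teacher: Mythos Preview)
Your proposal is correct and matches the paper's intended (but unwritten) argument: the paper simply states the corollary as an immediate consequence of the preceding lemma together with Gerschgorin's theorem, and your proof supplies exactly the details that make that immediacy precise---namely, applying Gerschgorin to $A^T$ to obtain the column radii $(N-1)|w_i|$ centered at $x_i+w_i$, and then absorbing the offset of the center via the triangle inequality to land in $B(x_i,N|w_i|)$. Your bookkeeping for the connected-component count (that the $B'$-partition refines the $B$-partition on index sets, so the Gerschgorin counts sum correctly) is the right way to transfer part~(2) to the enlarged balls, and the degenerate case $w_i=0$ is handled as you say.
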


One may wonder which one between Gerschgorin radii and Newton radii is more strict. 
It turns out that if each $x_i$ is a good approximation of a simple root $\zeta_i$ (or more
formally, when $x_i \to \zeta_i$), we have that they are exactly the same and are both 
equal to $N\cdot\lvert x_i - \zeta_i \rvert$ up to the first order expansion. 
This can be verified by the following simple computation under
the assumption that $x_i = \zeta_i + \epsilon_i$ 
with $\epsilon_i$ sufficiently small. 
In the cases of Gerschgorin radii, we have
\[
  N\cdot \left\lvert \frac{f(x_i)}{\prod_{j \neq i} (x_i - x_j)} \right\rvert = 
    N \cdot \lvert x_i - \zeta_i \rvert \cdot \left\lvert \prod_{j \neq i} \frac{x_i - x_j - \epsilon_j}{x_i - x_j} \right\rvert \approx N\cdot\lvert x_i - \zeta_i \rvert 
\]
while, for Newton radii, we obtain
\[
    N\cdot\left\lvert \frac{f(x_i)}{f'(x_i)} \right\rvert = N\cdot\left\lvert
      \frac{1}{ \sum_{j = 1}^n \frac{1}{x_i - x_j - \epsilon_j} } \right\rvert
      \approx N\cdot\left\lvert
      \frac{1}{ \frac{1}{x_i - x_i - \epsilon_i} } \right\rvert = 
      N\cdot\lvert x_i - \zeta_i \rvert. 
\]

This implies that not only they are essentially equivalent, but also that they
are optimal up to a factor of $N$.

\subsection{Detecting Real Roots} 

For a polynomial $f$ with real coefficients, it was observed in 
\cite{2010arXiv1011.1091H} that $\alpha$-theory can be used
to certifiably decide the reality or nonreality of the associated root
given an approximate root.  This local certification test is based on the simple
fact that a root of $f$ is real if and only if there is a real approximate root.

Corollary~\ref{cor:gerschgorin} can be used to develop a global certification test
as developed in the following which uses $\re(z)$ and $\im(z)$ 
to denote the real and imaginary part, respectively, of a number $z\in\CC$.

\begin{thm}  \label{thm:mpsolve_real_classification}
Let $f$ be a monic univariate polynomial of degree $N\geq4$ with 
$N$ distinct roots and $x_1,\dots,x_N\in\CC$.
Suppose that $1\leq i\leq N$ such that for all $j\neq i$,
\begin{enumerate}
\item $|\im(x_i)| < N\cdot\beta(f,x_i)$,
\item $\max\{|\re(x_i) - \re(x_j)|,|\im(x_i)+\im(x_j)|\} > N\cdot(\beta(f,x_i) + \beta(f,x_j))$, and
\item $B(x_i,N\cdot\beta(f,x_i)) \cap \bigcup_{k\neq i} B(x_k,N\cdot\beta(f,x_k)) = \emptyset$.
\end{enumerate}
Then, $f$ has a real root $\zeta_i$ such that $\re(x_i) - N\cdot\beta(f,x_i) \leq \zeta_i \leq \re(x_i) + N\cdot\beta(f,x_i)$.
\end{thm}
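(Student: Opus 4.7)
The plan is to apply Lemma~\ref{lem:newtonradius} to produce a root of $f$ inside the Newton ball $B_i := B(x_i, N\beta(f, x_i))$, and then use the isolation hypotheses (1)--(3) together with the (implicit) real-coefficient assumption on $f$ to force this root to be real; the interval bound is immediate once reality is known.

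The first move is to rephrase the hypotheses geometrically. Condition (3) says $B_i$ is disjoint from $B_j$ for every $j \neq i$, and condition (1) says $B_i$ meets the real axis. For condition (2), since $\re(\bar{x}_j) = \re(x_j)$ and $\im(\bar{x}_j) = -\im(x_j)$, one computes
\[
|x_i - \bar{x}_j|^2 = (\re(x_i)-\re(x_j))^2 + (\im(x_i)+\im(x_j))^2,
\]
so (2) gives $|x_i - \bar{x}_j| > N(\beta(f, x_i) + \beta(f, x_j))$; that is, $B_i$ is also disjoint from the conjugate-reflected ball $\overline{B_j} := B(\bar{x}_j, N\beta(f, x_j))$ for every $j \neq i$. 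Now apply Lemma~\ref{lem:newtonradius} to obtain a root $\zeta \in B_i$ and suppose, for contradiction, that $\im(\zeta) \neq 0$. Then $\bar{\zeta}$ is a second, distinct root lying in $\overline{B_i}$. If $\bar{\zeta}$ were in some $B_j$ with $j \neq i$, conjugating would place $\zeta$ in $\overline{B_j}$, violating the disjointness just established. Hence $\bar{\zeta}$ lies in no $B_j$ with $j \neq i$. Invoking Corollary~\ref{cor:gerschgorin}, together with the first-order equivalence between Newton and Gerschgorin radii recorded at the end of Section~\ref{subsec:mpsolve}, every root must lie in the union of the balls of the cover, which forces $\bar{\zeta} \in B_i$. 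But condition (3) then makes $B_i$ an isolated connected component of the cover that hosts two distinct roots, contradicting the exact count of one root per component guaranteed by Corollary~\ref{cor:gerschgorin}. Hence $\zeta \in \RR$, and $|\zeta - \re(x_i)| \leq |\zeta - x_i| \leq N\beta(f, x_i)$ yields the stated interval.

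The main obstacle is reconciling the Newton-ball radii $N\beta(f, x_i)$ used in the hypotheses with the Gerschgorin-ball radii $N|w_i|$ used in the counting statement of Corollary~\ref{cor:gerschgorin}: the two coincide only to first order in the approximation error, so the counting statement does not literally apply to the Newton cover. A cleaner variant would rerun the same geometric reasoning with $|w_i|$ in place of $\beta(f, x_i)$ throughout (1)--(3), at worst a minor tightening of the hypotheses, so that Corollary~\ref{cor:gerschgorin} applies directly, while still using Lemma~\ref{lem:newtonradius} to produce the initial root $\zeta \in B_i$ that starts the argument.
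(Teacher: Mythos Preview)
Your approach is essentially the paper's: assume the root $\zeta_i\in B_i$ is nonreal, note that $\overline{\zeta_i}$ is then a distinct root, use condition~(2) to separate $B_j$ (for $j\neq i$) from the reflected ball $\overline{B_i}$, and derive a contradiction from a covering/counting statement. Your geometric reading of the three hypotheses is cleaner and more explicit than the paper's, but the skeleton is identical.

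Where you go further than the paper is in flagging the gap. The paper's proof simply asserts that $\zeta_i$ is the \emph{unique} root in $B_i$ and that ``there must exist $j$'' with $\overline{\zeta_i}\in B_j$; neither claim is justified there. Both claims amount to the covering-and-counting content of Corollary~\ref{cor:gerschgorin}, which, as you correctly point out, is stated for the Gerschgorin radii $N|w_i|$ rather than the Newton radii $N\beta(f,x_i)$, and the two agree only to first order. So the issue you isolate is real, and it is present in the paper's own argument as well; you have not introduced it.

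Your proposed repair---rerunning the hypotheses with $|w_i|$ in place of $\beta(f,x_i)$---is one clean fix. An alternative that stays closer to the stated hypotheses is to strengthen condition~(3) to pairwise disjointness of \emph{all} the Newton balls $B_k$: then Lemma~\ref{lem:newtonradius} puts at least one root in each of the $N$ disjoint balls, and since $f$ has exactly $N$ roots this forces exactly one root per ball and covers all roots, with no appeal to Corollary~\ref{cor:gerschgorin} needed. Either route closes the argument; as written, both your proposal and the paper rely on an unproved counting step.
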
  
\begin{proof}
To prove by contradiction, suppose that $\zeta_i$ is the unique root of $f$ in the ball 
$B(x_i,N\cdot\beta(f,x_i))$ such that $\im(\zeta_i) \neq 0$.  
Hence, the conjugate of $\zeta_i$, namely $\overline{\zeta_i}$,
is also a root of $f$.  So, there must exists $j$ such that
$\overline{\zeta_i}\in B(x_j,N\cdot\beta(f,x_j))$.  
Observe that Condition $2$ implies that every circle centered in $x_j$, $j \neq i$
cannot intersect the circle $B(\overline{x_i}, N\cdot\beta(f,x_i))$. 
In particular, this implies that 
the complex conjugate of $\zeta_i$ cannot be a root of $f$, 
unless $\zeta_i$ is a real root. 
\end{proof}

In the random setting where, with probability $1$, the univariate polynomial $f$
of degree $N$ will have $N$ distinct roots, we are trying to yield a 
set of $N$ points consisting of approximate roots with distinct associated roots.
When the roots have a large pairwise separation, 
numerical methods such as those implemented in MPSolve yield an 
effective tool for approximating such points.  As the minimum of the 
pairwise distance decreases, one needs to use higher precision in order
to yield approximations to separate the roots and achieve quadratic convergence
of Newton's method.  In the limit when the minimum pairwise distance is zero,
standard Newton's method can only achieve linear convergence.
In this case, for example, MPSolve can compute an approximation correct
to $k$ digits, where the integer $k$ is selected by the user.

\subsection{Certifiable Regions for Comparing the Methods}

As a comparison of the two certification methods, we consider
a ``Wilkinson-type'' polynomial $f(x) = \prod_{j=1}^{10}(x-j)$ 
which has exact integer roots in order to compare numerical results.
Figure~\ref{fig:alphaconv} summarizes results from $\alpha$-theory
and Figure~\ref{fig:convmpsolve} summarizes results from MPSolve.
In both of these plots, the white regions correspond to certifiable regions
where the black points are approximations computed by the
{\tt roots()} command in {\sc Matlab} and MPSolve, respectively.
When using MPSolve, the roots are inside the certifiable regions
while the {\tt roots()} command, probably due to the limitation of using 
double precision computations, obtained roots outside of some $\alpha$-theoretic
certifiable quadratic convergence basins.

\begin{figure}[ht]
  \begin{center}
    \includegraphics[width=\linewidth]{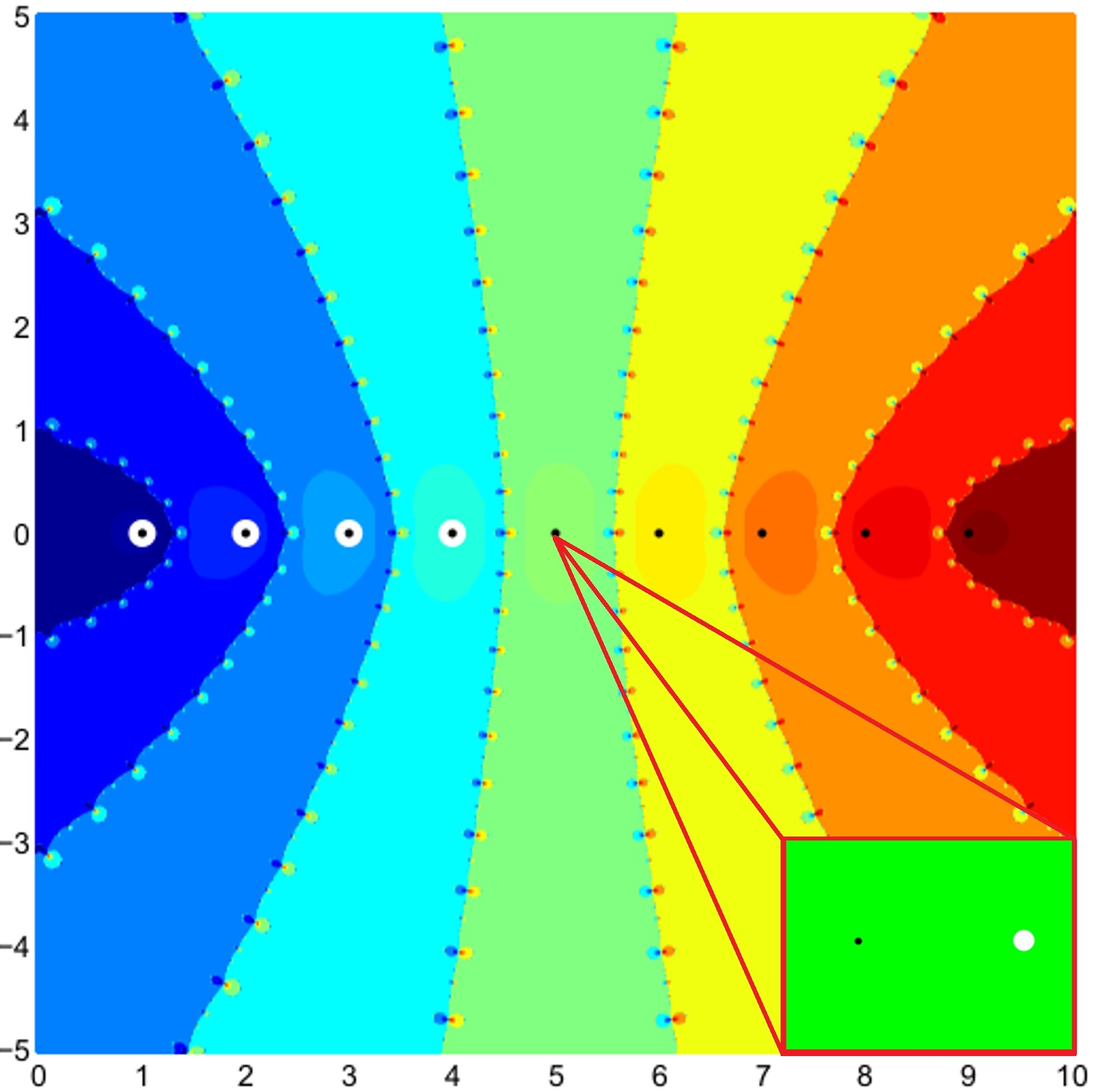}
   \end{center}
   \caption{Certifiable regions obtained using {\tt alphaCertified} with the certified
   quadratic convergence zone in white and the approximations from the {\tt roots()} command in {\sc Matlab}
   in black.}
   \label{fig:alphaconv}
\end{figure}

\begin{figure}[ht]
  \begin{center}
    \includegraphics[width=\linewidth]{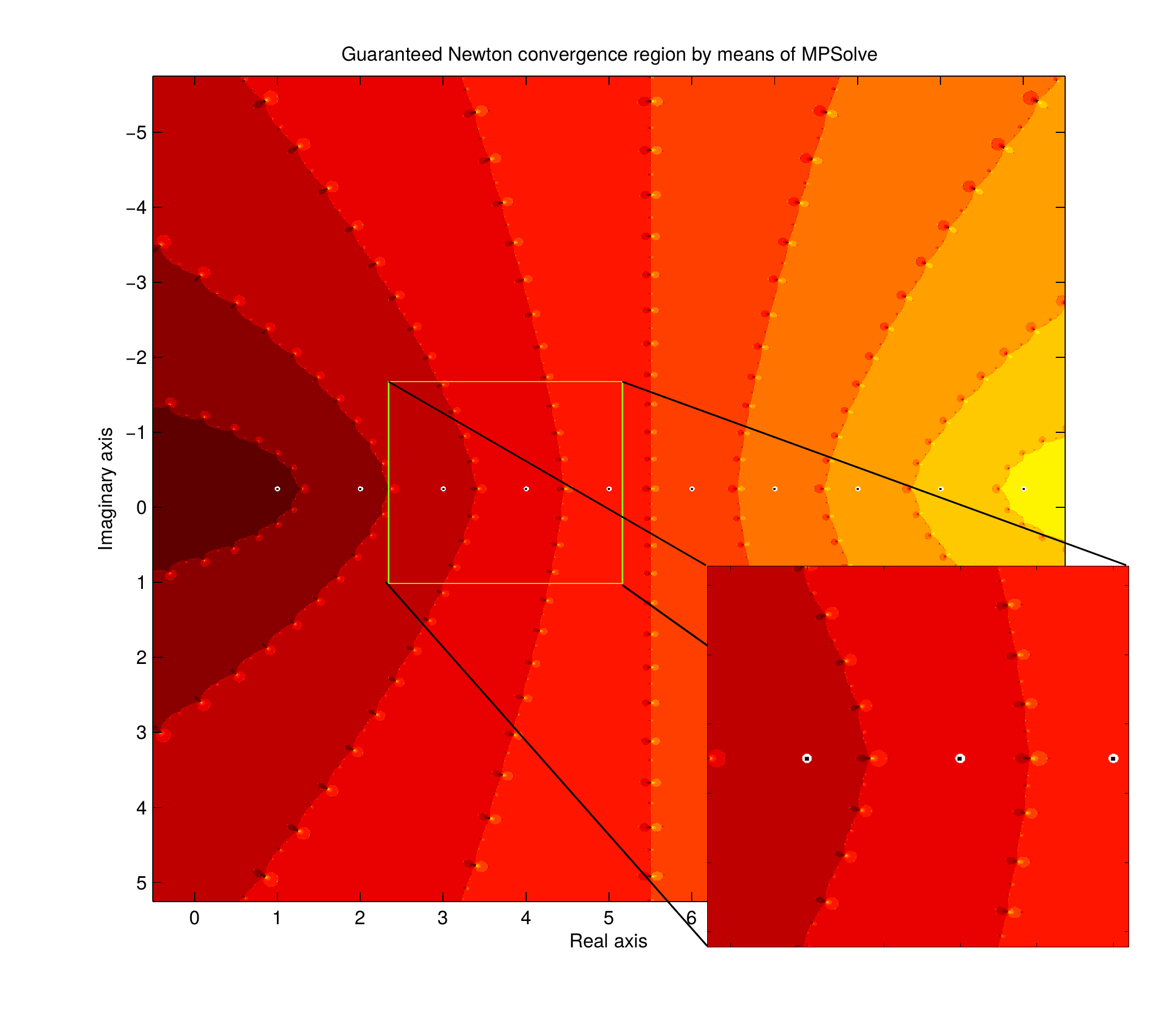}
  \end{center}
  \caption{Certifiable regions obtained by means of Tilli's Theorem. The
   black dots are the approximations computed using MPSolve.}
  \label{fig:convmpsolve}
\end{figure}

In the rest of the figure the areas were filled with different colors based on the
root to which the Newton method converges if using that point as the starting~one. 

We can note how the regions obtained through MPSolve's certification procedure have
a similar radius, while this is not true for {\tt alphaCertified}. This is a direct
consequence of the uniformity of the constants involved in the first strategy. 

\section{Numerical Setup}\label{sec:numerical_set_up}
The following describes our numerical setup for generating random
polynomials.  In particular, we generate a random polynomial of 
degree $N$ by drawing the coefficients from a chosen random distribution such as the uniform distribution or the Gaussian distribution with specified mean and variance using {\sc Matlab} which 
generates random numbers with 32-bit precision. 
Then, we use two numerical root-finding methods and two certification methods.

First, we approximate all of the roots using either the 
companion matrix method as implemented in 
{\sc Matlab}’s {\tt roots()} command\footnote{We have tried other root-finding methods such as Lindsey-Fox algorithm, as well 
the built-in root-finding methods in Maple and Mathematica. 
However, we find the same conclusions as with {\sc Matlab}'s {\tt roots()} command.} 
or using MPSolve. 

To use $\alpha$-theory to certify the roots, we compute 
a set of $N+1$ rational approximations $\hat{a_i}$ of the coefficients
of the polynomial $f(x) = \sum_{i = 0}^N a_i x^i$
such that, for each $i = 0, \dots, N$ we have $\lvert a_i - \hat{a_i} \rvert \leq \lvert a_i \rvert \cdot 10^{-16}$. 
With numerically estimated roots and rational coefficients in hand, we use
{\tt alphaCertified} using $256$-bit floating point arithmetic
to certify the approximated roots.  
If any of the $N$ roots is not certified, then we 
run several Newton iterations on the uncertified root until 
it finally reaches the quadratic convergence zone of some exact root.
In our experiments, we obtained~$N$ certified distinct roots for 
the random sample and then counted the real roots.

As an alternative approach, we use the approach presented in 
Section~\ref{subsec:mpsolve} to obtain approximate solutions for 
the roots of a given polynomial. 
MPSolve provides a {\sc Matlab} interface 
which allows the user to request an arbitrary number of digits
and certify the result.  This approach is valuable 
because it alleviates the numerical ill-conditioning issues that 
may be encountered when solving high degree polynomials with (possibly big) 
integer coefficients.
For these cases, the {\sc Matlab}'s {\tt roots()} command, 
even if using a backward stable algorithm, 
can give very poor approximations of the roots. 
As a consequence, it is possible that {\tt alphaCertified} 
may not be able to certify them due to large errors.
This issue is solved by MPSolve 
using a global approach that is guaranteed to produce 
$16$ correct digits on the output (and certify they are approximate solutions). 
A suitable level of floating point multiprecision will be used by MPSolve internally
to achieve this, but it may be not necessary to expose this to {\sc Matlab} (unless more than $16$ digits are required). 

This method is effective in our experiments. 
As an example, we solved $2000$ random polynomials of degrees 
between $100$ and $1000$ using MPSolve, which was
able to certify all the roots and determine their reality.
While in principle it may happen that a root cannot be classified via
Theorem~\ref{thm:mpsolve_real_classification}, this did 
not happen in our experiments. 

\section{Results}\label{sec:results}

The following presents our results for the roots of 
random polynomials with coefficients taking 
indenpendent and identically distributed (i.i.d.) 
values from two distributions:
the Gaussian distribution with mean $0$ and variance $1$ which is one of the most extensively studied cases and can serve as our benchmark as many analytical results are available here, and the Cauchy distribution for which only a handful of analytical results are available and hence
our numerical results can provide valuable input to the theory.
 
\subsection{Quantifying Certification Procedures}

While trying to approximate the roots, we
wished to ensure that our numerical approximations
were in the quadratic convergence zone of Newton's method.
In Section~\ref{sec:Numerical_Methods}, two different certification methods are discussed.
The first, based on $\alpha$-theory, 
which requires that $\alpha(f,x) < \left(13-3\sqrt{17}\right)/4$,
is a local certification approach in that it requires only
data from one point.  
The second, based on Gerschgorin's theorem, 
which requires that the Newton correction is smaller than the 
relative distance between the roots divided by approximately $6N$,
is a global certification approach in that it requires information
about all roots simultaneously.

The second approach can be reformulated in the same framework
as $\alpha$-theory if we replace $\gamma(f,x)$ with 
$\gamma_M(f) = 6N \cdot \max_{i \neq j} (\zeta_i - \zeta_j)^{-1}$.
One can now compare the mean value of $\gamma$ at the roots of $f$ 
with $\gamma_M$ with the smaller value leading to a 
larger~certification~region.

If we assume that the distance of the roots is proportional to $N^{-1}$,
which is a realistic expectation for random polynomials, 
then $\gamma_M \approx O(N^2)$. 
Experimentally, we have verified that $\gamma(f,x)$ in 
the $\alpha$-theory has an exponential growth, as can be
clearly seen in Figure~\ref{fig:meangamma}. 
This causes certification to be computationally prohibitive
as $N$ increases.  For example, using only double precision, 
Figure~\ref{fig:meangamma} suggests that degrees less than $100$
are feasible using double precision, but may be insufficient
for degrees larger than $100$ due to large values of $\gamma$.

\begin{figure}[ht] 
  \begin{center}
    \includegraphics[width=.9\linewidth]{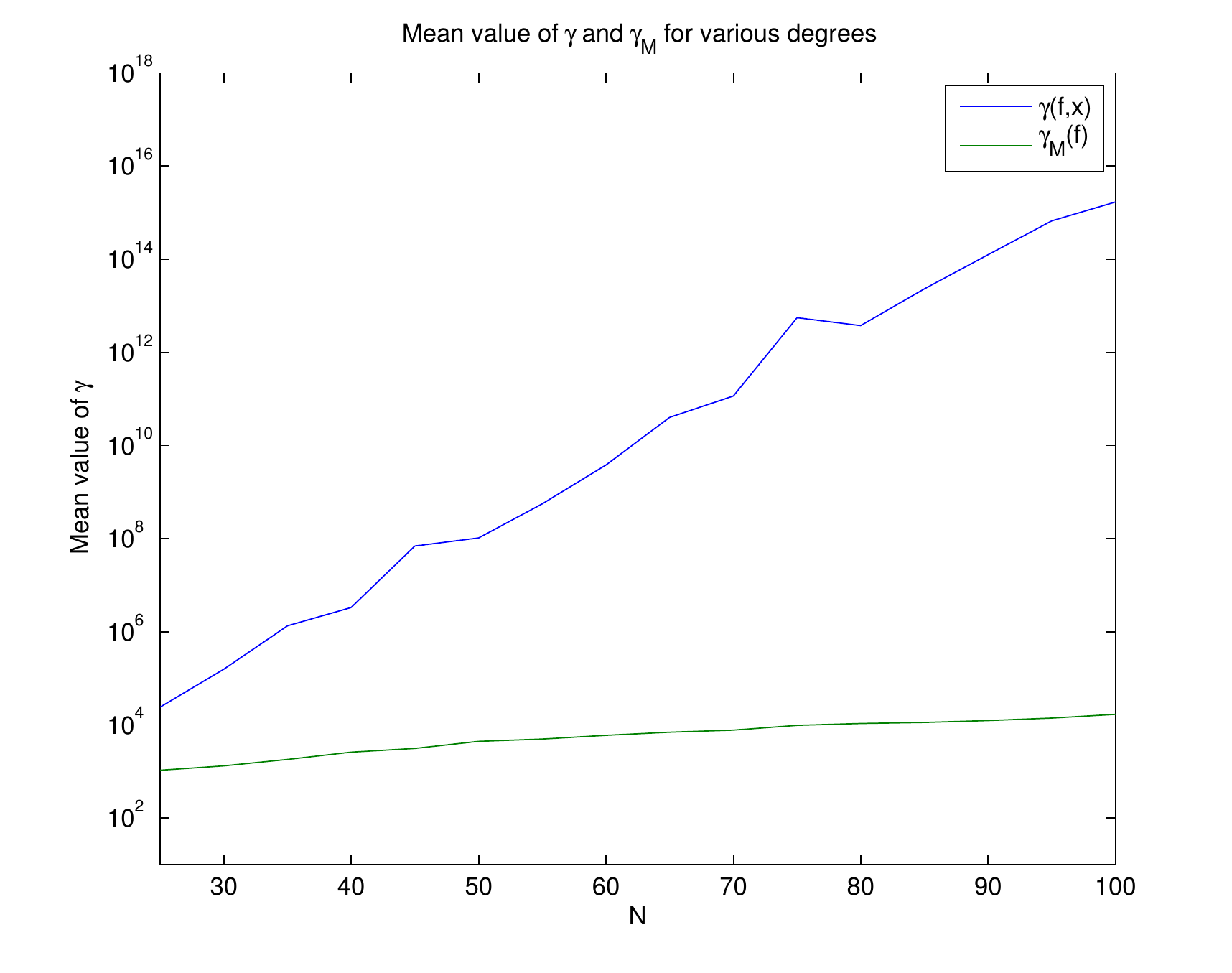}
  \end{center}
  \caption{The mean value of the $\gamma(f,x)$ and $\gamma_M(f)$ constants
   for random Cauchy polynomials with degree between $25$ and $100$.}
  \label{fig:meangamma}
\end{figure}

It is important to note that the comparison between these two strategies is not fair since $\gamma(f,x)$ is directly computable 
given a single approximation while $\gamma_M(f)$ needs to be 
bounded using some kind of inclusion theorem since it 
relies on the (unknown) distance between roots of $f$. 
However, our numerical results should provide valuable feedback 
when comparing the two certification procedures.
In particular, our experiments showed that the usage of
the second strategy, using a global approach, is much more 
effective for the cases under consideration. 

In Section~\ref{subsec:mpsolve}, we showed that the 
Gerschgorin radii are suboptimal only up to a factor 
of $N$.  This additional factor of $N$ leads to a 
moderate overestimate of~$\gamma_M$, namely $\gamma_M \approx O(N^3)$. 

\subsection{Real Roots}

For a first experiment, we analyzed the 
number of real roots of random polynomials. 
Many theoretical results are known, 
but further analysis could be carried out to obtain
more information about the real roots. 
In particular, the following summarizes some
experimental results that we have obtained
by solving $2000$ polynomials of 
degrees $100$, $200$, \dots, $1000$. 
We measured the mean and variance for our sample
and checked our results against the theoretical predictions, when available.

\subsubsection{Average Number of Real Roots}

The average number of real roots is one of the most well-studied quantities in the
context of random polynomials. We have estimates both for the Gaussian case and for
the Cauchy case. We only summarize the main results here and leave
more in-depth information to \cite{Farahmand:98}. 

\begin{thm} Let $f(x) = \sum_{i = 0}^N a_i x^i$ be 
a random polynomial of degree $N$ where $a_i$ are i.i.d. 
selected from a standard normal distribution.
If $R(a_0, \dots, a_n)$ is the number of real roots of $f$,
there exists a constant $C_0$ such that
  \[
    \mathbb{E}(R(a_0, \dots, a_n)) = \frac{2}{\pi} \log N + C_0 + \frac{2}{N \pi} + 
      O\left( \frac{1}{N^2} \right).
  \]
The constant $C_0$ is approximately equal to $0.6257358072$. 
\end{thm}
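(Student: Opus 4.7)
The natural starting point is the Kac--Rice formula, which gives an exact integral representation for the expected number of real zeros of any random function whose joint law of $(f(x),f'(x))$ at each $x$ is absolutely continuous. For $f(x)=\sum_{i=0}^N a_i x^i$ with $a_i$ i.i.d.\ standard normal, the vector $(f(x),f'(x))$ is jointly Gaussian with mean zero, so one obtains
\begin{equation*}
\EE(R) \;=\; \frac{1}{\pi}\int_{-\infty}^{\infty} \frac{\sqrt{A(x)C(x)-B(x)^2}}{A(x)}\,\dd x,
\end{equation*}
where $A(x)=\sum_{i=0}^N x^{2i}$, $B(x)=\sum_{i=1}^N i\,x^{2i-1}$, $C(x)=\sum_{i=1}^N i^2 x^{2i-2}$. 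I would derive this by conditioning $f'(x)$ on $f(x)=0$ and noting that the conditional variance of $f'$ becomes $(AC-B^2)/A$, which together with the density of $f(x)$ at $0$ gives the integrand.

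Next I would exploit symmetry. The substitution $x\mapsto -x$ shows the integrand is even, and the reciprocal substitution $x\mapsto 1/x$, combined with the identity $x^{2N}A(1/x)=A(x)$ (and the corresponding relations for $B$ and $C$ after differentiating), reduces the full real line integral to four times an integral over $[0,1]$. In closed form one has $A(x)=(1-x^{2N+2})/(1-x^2)$, and $B$ and $C$ can be obtained by differentiating $A$; substituting these into the integrand gives a rational-in-$x$ expression involving $x^{2N}$.

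I would then split the integral at some $x=1-\delta_N$ and perform a boundary-layer analysis near $x=1$, where the mass that produces the $\log N$ concentrates. For $x$ bounded away from $1$, the geometric sums behave like $(1-x^2)^{-1}$ and contribute an $N$-independent piece, which after including the $x\in[0,1]$ reduction and the factor $4/\pi$ yields a candidate value for $C_0$ as a convergent improper integral. Near $x=1$, I rescale $x=1-t/N$ and expand: the integrand converges pointwise to a universal profile whose integral is exactly $2/\pi\cdot\log N$ plus a computable constant correction; Euler--Maclaurin (or the contour-integral representation used by Wilkins) furnishes the $2/(N\pi)$ correction and proves the $O(N^{-2})$ remainder.

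The main obstacle, I expect, is not the leading $(2/\pi)\log N$ term (which is Kac's classical computation) but the \emph{identification} of the constant $C_0$ to the stated numerical value and the clean control of the $O(N^{-2})$ remainder. Extracting $C_0$ requires summing the boundary-layer constant with the bulk contribution from $[0,1-\delta_N]$; because both pieces separately depend on the cutoff $\delta_N$, one must match them carefully. I would follow the approach of Edelman--Kostlan and Wilkins: express the integrand through the Szeg\H{o}-style kernel $K_N(x,y)$, apply the Mehler-like identity to write the boundary-layer integral in terms of $\int_0^\infty\sqrt{\coth^2 t-t^{-2}}\,\dd t$ or an equivalent form, and use Abel summation plus the Euler--Maclaurin formula on the tail to uncouple the cutoff and arrive at the stated expansion.
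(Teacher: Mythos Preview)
The paper does not prove this theorem at all: it is quoted as a known result, with the reader directed to \cite{Farahmand:98} (and implicitly to Kac, Edelman--Kostlan, and Wilkins) for details. So there is no ``paper's own proof'' to compare against. Your sketch is the standard route taken in that literature---Kac--Rice integral, reduction by the symmetries $x\mapsto -x$ and $x\mapsto 1/x$ to $[0,1]$, and a boundary-layer analysis near $x=1$ to extract $\frac{2}{\pi}\log N$, with the constant $C_0$ and the $2/(N\pi)$ term coming from the refined Wilkins-type expansion---so had the authors included a proof, it would almost certainly have followed the same outline you give.
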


\begin{thm} 
  Let $f(x) = \sum_{i = 0}^N a_i x^i$ be a random polynomial of degree $N$ where   $a_i$ are i.i.d. with a standard Cauchy distribution. 
If $R(a_0, a_1, \dots, a_N)$ is the number of real roots of $f$,
then there exists constants $C$, $A_0$, and $A_2$ such that
  \[
    \mathbb{E}(R(a_1, \dots, a_N)) = C\log (N + 1) + A_0 + \frac{A_2}{(N+1)^2} + 
      O\left( \frac{1}{N^3} \right). 
  \]
The constants $C$, $A_0$, and $A_2$ are approximately equal to: 
\[
C = 0.7413, \qquad
A_0 = 0.559132, \qquad
A_2 = 0.230596.
\]
\end{thm}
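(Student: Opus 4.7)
The plan is to apply the Kac--Rice formula for the expected number of real zeros:
\[
\EE[R] = \int_{-\infty}^{\infty} \int_{-\infty}^{\infty} |v| \, p_{f(x),\, f'(x)}(0, v) \, \dd v \, \dd x,
\]
where $p_{f(x),\, f'(x)}$ denotes the joint density of the random vector $(f(x), f'(x))$. The reciprocal substitution $x \mapsto 1/x$, combined with the fact that the reversed polynomial $x^{N} f(1/x)$ has the same coefficient distribution as $f$, reduces the integration to $x \in [0, 1]$ up to an overall symmetry factor.

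The crucial algebraic advantage of the Cauchy case is that the standard Cauchy distribution is strictly stable of index one with characteristic function $\ee^{-|t|}$. Hence the joint characteristic function factors cleanly as
\[
\phi_{f(x),\, f'(x)}(t_1, t_2) = \prod_{j = 0}^N \EE\bigl[\ee^{\ii a_j (t_1 x^j + t_2 j x^{j-1})}\bigr] = \exp\!\left(-\sum_{j=0}^N \bigl| t_1 x^j + t_2 \, j x^{j-1} \bigr|\right).
\]
Fourier inversion expresses $p_{f(x),\, f'(x)}(0, v)$ as a two-dimensional oscillatory integral; integrating it against the Kac--Rice weight $|v|$ and using the distributional identity for the Fourier transform of $|v|$ collapses one variable and yields a one-dimensional integral representation for the expected density of real zeros at $x$, call it $\rho_N(x)$.

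With $\rho_N$ in hand, I would perform the asymptotic analysis by splitting $x \in [0,1]$ into a bulk region $x \in [0, 1 - c/N]$ and a transition region $x \in [1 - c/N, 1]$. In the bulk, the sums $\sigma(x) = \sum_{j=0}^{N} |x|^j$ and $\tau(x) = \sum_{j=1}^{N} j |x|^{j-1}$ converge uniformly to their geometric-series limits; a careful expansion in powers of $x^{N+1}$ (whose contributions decay exponentially away from $x = 1$) is designed to deliver the constant $A_0$ and, after a second-order expansion, the $A_2/(N+1)^2$ correction. In the transition region, the rescaling $u = (N+1)(1-x)$ is natural and the integrand should approach a universal limiting profile whose integral yields the leading $C \log(N+1)$ behavior, with $C$ emerging as a single explicit one-dimensional integral that plays the role analogous to the Edelman--Kostlan kernel in the Gaussian case.

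The main obstacle is obtaining closed or numerically tractable expressions for $C$, $A_0$, and $A_2$. Unlike the Gaussian setting, where the joint density is an explicit quadratic exponential and the Kac--Rice integrand reduces to a rational function, here the integrand is the Fourier transform of a piecewise-linear function of $(t_1, t_2)$ and does not simplify to elementary functions. One will have to express these constants as convergent one-dimensional special-function integrals and evaluate them numerically to the stated precision. A secondary technical issue is verifying the absence of any $O(1/N)$ correction between $A_0$ and $A_2/(N+1)^2$; this should follow from a parity argument on the Taylor expansion of $\sigma(x)$ and $\tau(x)$ about $x = 1$, but must be carried out carefully because odd orders do not automatically vanish in oscillatory Fourier integrals.
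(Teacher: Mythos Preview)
The paper does not supply a proof of this theorem. It is quoted as a known result, with the reader directed to Farahmand's monograph for details; the surrounding text explicitly says ``We only summarize the main results here and leave more in-depth information to \cite{Farahmand:98}.'' Consequently there is no argument in the paper against which to compare your proposal.

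As for the proposal itself: the Kac--Rice framework is the right idea in spirit, but you should be aware that the standard hypotheses of Kac--Rice (finite second moments, a well-behaved conditional density of $f'(x)$ given $f(x)=0$) fail outright for Cauchy coefficients, since the Cauchy law has no moments of any order. Your workaround via the joint characteristic function and Fourier inversion is the correct way to salvage the computation, but it is not automatic that the resulting formal expression for the zero density is valid; one must justify interchanging the $|v|$-integral with the inverse Fourier transform, and the Fourier transform of $|v|$ is only a tempered distribution, so the ``collapse'' step needs a careful regularization argument. The published treatments of the Cauchy case (Shepp, Farahmand, and others) do essentially what you outline but devote real effort to these analytic justifications, and the constant $C$ emerges from an integral that is not elementary. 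Your identification of the bulk/edge splitting and the parity issue for the missing $1/(N+1)$ term is on target, but as written this is a sketch rather than a proof: the hard work lies precisely in the steps you flag as obstacles.
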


\begin{figure}[ht]
  \includegraphics[width=.8\linewidth]{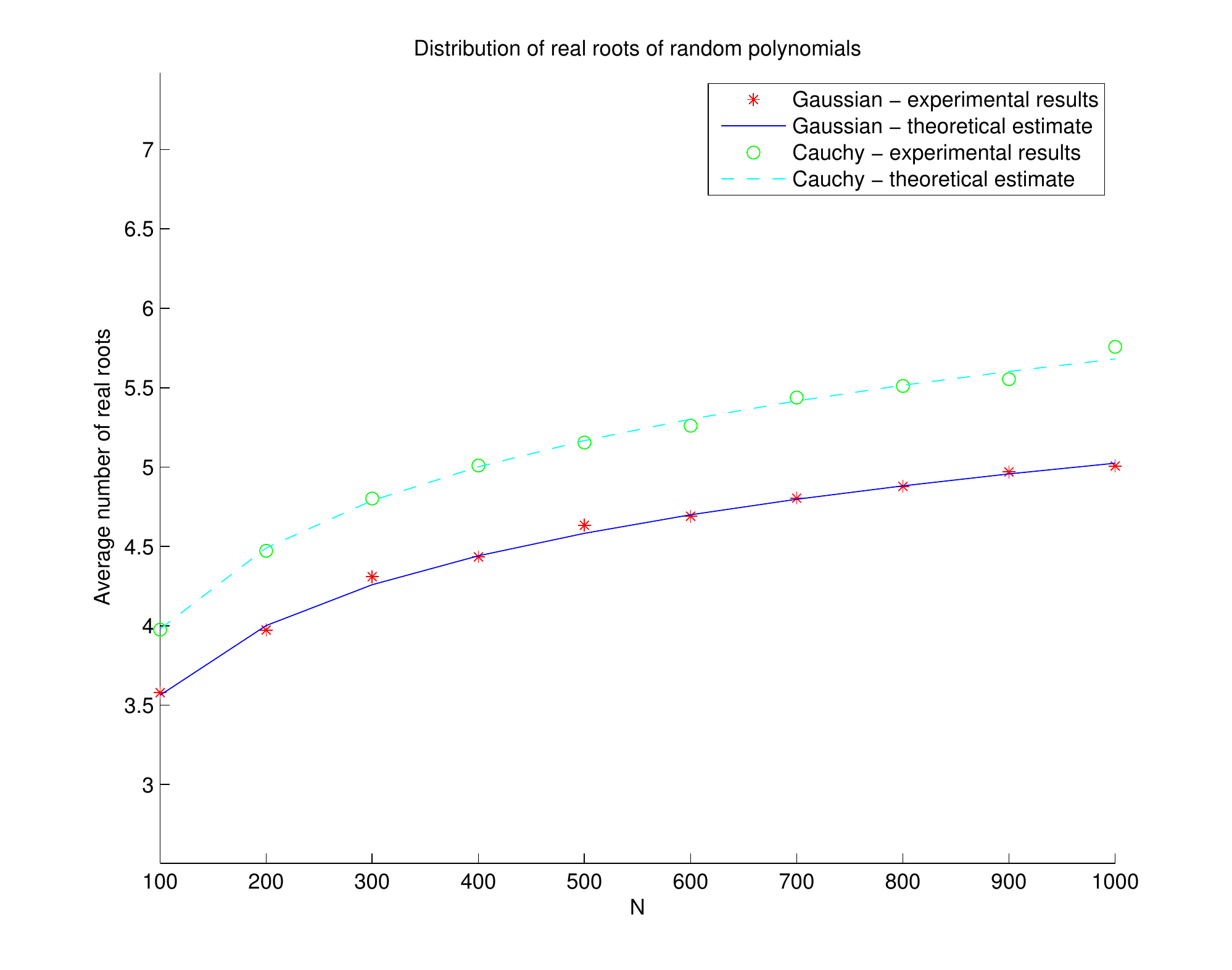}
  \caption{The mean number of real roots for Cauchy and Gaussian random polynomials.}\label{fig:cauchy_gauss_mean}
\end{figure}

The experimental results are summarized in 
Figure~\ref{fig:cauchy_gauss_mean}. 
This figure shows that the theoretical results agree with the 
experimental results, but the convergence can be quite slow. 
That is, even with $2000$ sample polynomials, 
there are still some oscillations 
around the theoretical value that are visible from the plot. 

%The asymptotic estimate for the Cauchy case is reported in yellow. It is clear from the 
%plot that it is off by a constant \textit{@Leonardo: Why? I don't see it off by a constant. I %thought the numerical results
%perfectly matched the asymptotic formula of Farahmand, as shown in the Figure?}. This is not %contradictory since the number of real roots
%will go to infinity as the degree grows, and so the constant error will be negligible. 

%Nevertheless, it can be noted that given the slow increase of the number of roots (that
%is logarithmic) the error is still quite big even at degree $1000$. 

\subsubsection{Variance of the Number of Real Roots}

From the experimental data, we can estimate the variance
on the number of real roots.
For the Gaussian distribution, we compare this with the 
following theoretical result obtained in \cite{maslova}.

\begin{thm} \label{thm:maslova}
 Let $f(x) = \sum_{i = 0}^N a_i x^i$ be a random polynomial such that 
  the random variables $a_i$ are i.i.d. and satisfy the following: 
  \begin{itemize}
    \item $\mathbb{E}(a_i) = 0$, and
    \item the random variables $a_i$ do have a moment of order $2+s$ for
      some $s > 0$. 
  \end{itemize}
  If $M(N) := 4 \log(N)\left(\frac 1 \pi - \frac{2}{\pi^2} \right)$,
  then
  \[
    \lim_{N \to \infty} \frac{\mathrm{Var}(\#\{ \text{real roots of } f(x) = 0\})}{M(N)}  = 1.
  \]
\end{thm}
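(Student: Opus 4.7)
The plan is to establish the variance asymptotic via a Kac--Rice/second-moment computation, following Maslova's original strategy. First I would express the number of real zeros as
\[
 N_N := \#\{x \in \RR : f(x) = 0\} = \int_{-\infty}^{\infty} \delta(f(x))\,|f'(x)|\,\dd x,
\]
interpreted in the distributional/Kac--Rice sense, and write
\[
 \mathrm{Var}(N_N) = \mathbb{E}[N_N(N_N-1)] + \mathbb{E}[N_N] - (\mathbb{E}[N_N])^2,
\]
so that the factorial second moment becomes the double integral $\int\!\int \rho_2(x,y)\,\dd x\,\dd y$ of the two-point density
\[
 \rho_2(x,y) = \mathbb{E}\bigl[\,|f'(x)f'(y)|\,\big|\, f(x) = f(y) = 0\,\bigr]\,p_{f(x),f(y)}(0,0).
\]
Given $\mathbb{E}[N_N] = \frac{2}{\pi}\log N + O(1)$ from the Kac--Rice first-moment analysis, the task reduces to proving that the double integral contributes $\bigl(\frac{4}{\pi^2} + \frac{4}{\pi} - \frac{8}{\pi^2}\bigr)\log N + o(\log N) = \bigl(\frac{4}{\pi} - \frac{4}{\pi^2}\bigr)\log N + o(\log N)$, which after combining with the other two terms yields the stated constant $\frac{1}{\pi} - \frac{2}{\pi^2}$ (up to the factor $4$).

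Next I would localize. Real roots concentrate in the four regions near $x = \pm 1$, more precisely in windows of width $O(1/N)$ about $\pm 1$, with negligible contribution from $|x| \ll 1$ or $|x| \gg 1$. Using the symmetry $x \leftrightarrow -x$ and the involution $x \leftrightarrow 1/x$ (which reverses coefficients and preserves the distribution up to factors that wash out asymptotically), I would reduce the analysis to a single window, say $x = 1 - u/N$ with $u > 0$ bounded, and show that the contributions from the four windows are asymptotically independent so that the variances add.

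The main analytical step is asymptotic evaluation of the covariance matrix of $(f(x),f'(x),f(y),f'(y))$ in the rescaled variables $(u,v)$ with $x = 1 - u/N$, $y = 1 - v/N$. In the Gaussian case one finds explicit limiting covariances and can compute $\rho_2$ and $p_{f(x)}(0)\,p_{f(y)}(0)$ in closed form; the integral $\int\!\int [\rho_2 - p_1(x)p_1(y)]\,\dd x\,\dd y$ then produces a convergent universal constant per window, contributing $O(1)$, while the diagonal-type contribution from short-range correlations yields the $\log N$ growth. In the non-Gaussian case (only $(2+s)$-moment), I would replace the exact Gaussian computation with a CLT-based approximation: for each fixed $(u,v)$, the vector $(f(x),f'(x)/N,f(y),f'(y)/N)$ is a linear combination of the $a_i$ with coefficients whose Lyapunov ratios vanish, so a quantitative CLT gives the same limiting joint density, and a truncation argument handles the $|f'|$-factor uniformly using the $(2+s)$-moment.

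The hard part, and where Maslova's original paper does most of its work, is justifying the interchange of limit and integral: one needs uniform integrability of $|f'(x)f'(y)|$ against the (approximate) density of $(f(x),f(y))$ near zero, together with an effective CLT in total variation / density form to convert moment convergence into convergence of $\rho_2(x,y)$. I would handle this via a smoothing/truncation of the indicator $\mathbf{1}\{|f(x)|<\e\}\mathbf{1}\{|f(y)|<\e\}$ at scale $\e \to 0$ carefully coupled to $N$, combined with Rosenthal-type moment bounds using the $(2+s)$-moment hypothesis to dominate the integrand on large portions of $\RR^2$. Once the double integral is shown to contribute the correct $\log N$ coefficient and the cross-window and tail contributions are shown to be $o(\log N)$, dividing by $M(N)$ and sending $N \to \infty$ finishes the proof.
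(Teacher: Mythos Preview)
The paper does not prove this theorem at all: it is quoted from Maslova's 1974 paper \cite{maslova} and used purely as a benchmark for the experimental data in Figure~\ref{fig:variance}. There is therefore no in-paper proof for your attempt to be compared against.

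As for the attempt itself, the sketch you give is broadly the correct architecture of Maslova's original argument (Kac--Rice second moment, localization near $\pm 1$, CLT-based universality under the $(2+s)$-moment assumption). One inaccuracy worth flagging: the localization is not in windows of width $O(1/N)$ about $\pm 1$; the bulk of the real roots live in regions of the form $|\,|x|-1\,| \lesssim 1$ on a logarithmic scale, and the $\log N$ growth of both mean and variance comes precisely from integrating the one- and two-point densities over a range whose length in the natural rescaled variable grows like $\log N$, not from a window of fixed rescaled size. Your statement that ``the integral $\int\!\int [\rho_2 - p_1 p_1]\,\dd x\,\dd y$ \dots\ contributes $O(1)$, while the diagonal-type contribution from short-range correlations yields the $\log N$ growth'' has the bookkeeping the wrong way around: the $\log N$ in the variance is the result of integrating a short-range (decaying) correlation kernel over a domain of logarithmic extent, not a divergent diagonal term. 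Fixing this does not change the overall strategy, but it does change how the constant $\frac{1}{\pi} - \frac{2}{\pi^2}$ actually emerges.
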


\begin{figure}[ht]
  \begin{center}
    \includegraphics[width=\linewidth]{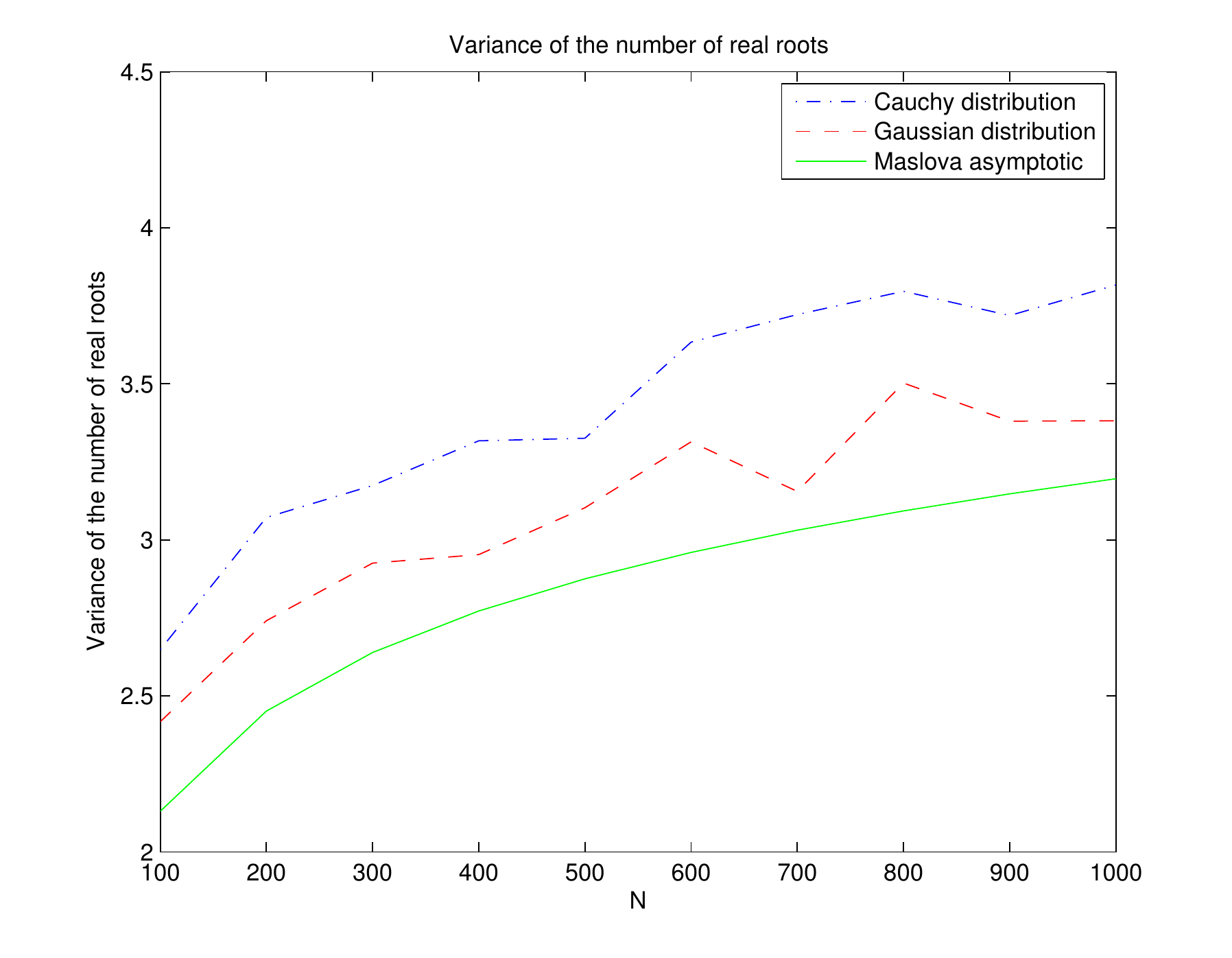}
  \end{center}
  \caption{The variance of the number of real roots for Gaussian random
  polynomials and Cauchy random polynomials.}
  \label{fig:variance}
\end{figure}

We can see from Figure~\ref{fig:variance} that the 
experimental estimate and theoretical value seems to be off by a constant. This is due to the fact that, unlike in the case of the expected value, we
do not know all the terms of the expansion so we can only state that the limit of 
$\mathrm{Var}(\# \{ \text{real roots of } f(x) \} ) / M(N)\rightarrow1$
as $N\rightarrow\infty$.

Moreover, the plot seems to confirm that both the Gaussian and the Cauchy case
have a logarithmic behavior as $N$ increases. Note that the hypotheses of 
Theorem~\ref{thm:maslova} are not satisfied in case of the Cauchy random polynomial
since the random variables do not have moments of any order. 

% \subsection{Histogram of the Number of Real Roots}

%We think that these numerical experiments may be valuable in order to %formulate
%further hypothesis on the distribution of the roots of random polynomials. 

%Here we report the a summary of the results by means of some plots. 

\subsubsection{Histograms of the Number of Real Roots}

In Figure~\ref{fig:gauss_no_real_roots_hist} and \ref{fig:cauchy_no_real_roots_hist}
we report the number of real roots for random polynomials of degree
$1000$ in form of an histogram. We analyze both the Gaussian and the Cauchy
case. The plots were generated starting from a sample of $1200$ random
polynomials with the different distributions for the coefficients. 

\begin{figure}[ht] \centering
  \begin{subfigure}[t]{.45\textwidth} \centering
    \includegraphics[width=\linewidth]{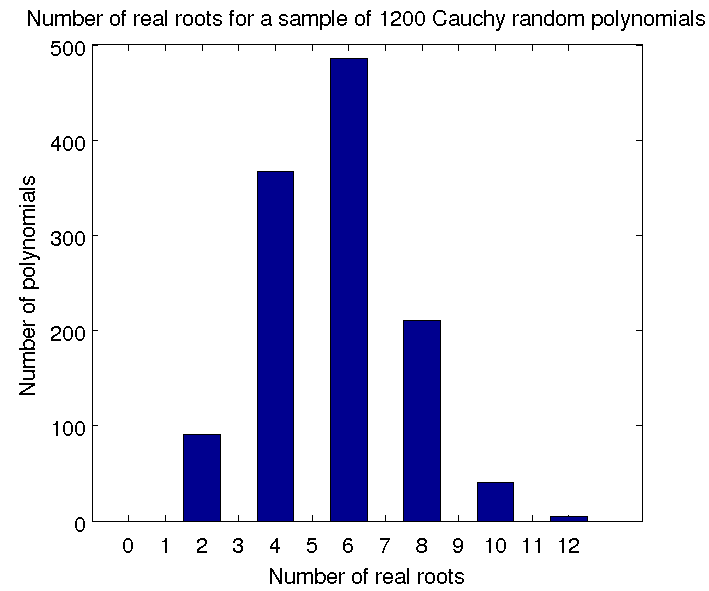}
    \caption{Cauchy polynomials.}
    \label{fig:cauchy_no_real_roots_hist}
  \end{subfigure}
  \begin{subfigure}[t]{.45\textwidth} \centering
    \includegraphics[width=\linewidth]{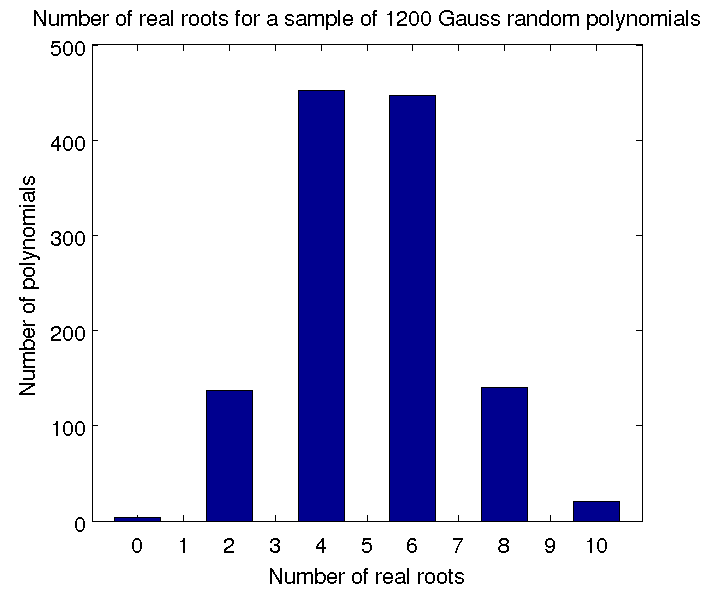}
    \caption{Gauss polynomials.}
    \label{fig:gauss_no_real_roots_hist}
  \end{subfigure}  
  \caption{Distribution of the number of real roots for a sample $1200$ random
    polynomials obtained using Cauchy and Gauss distributions.}
\end{figure}

\subsubsection{Average Number of Positive/Negative Real Roots}
Motivated by the problem of finding the average number of positive and negative real eigenvalues of real matrices, which amounts to finding
the average number of positive and negative real roots of the corresponding characteristic polynomials, we ask what is the average number of 
positive and negative real roots for our random polynomials.
We expect that the mean of the number of positive roots
is equal to the mean number of negative roots
since both of our random distributions
are symmetric with respect to the origin.
That is, it is easy to check that the polynomial
$f(-x) = \sum_{i = 0}^N (-1)^i a_i x^i$ has 
roots the opposites of the ones of $f(x)$ 
but the distribution of the coefficients is the same (since both in the Gaussian and in the Cauchy case multiplying by $-1$ does not change 
the random variable). 

Even though this is clear from an asymptotic point of view, it may be interesting to further examine the topic by looking at the variance of the number of positive and negative roots. 

\begin{figure}[ht]
 \centering
 \includegraphics[width=.8\linewidth]{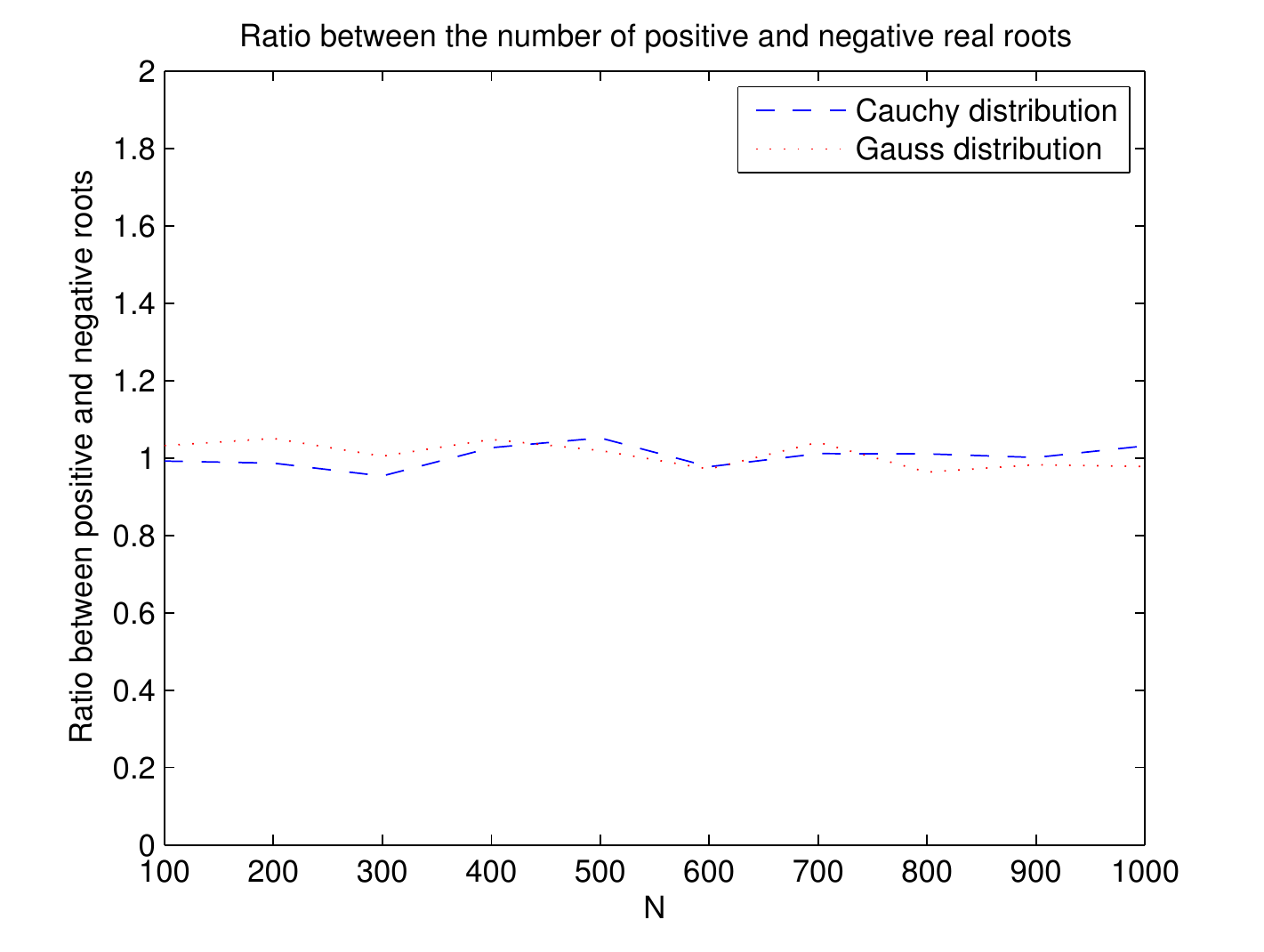}
 \caption{Ratio between the number of positive and negative real roots
 of a small sample of random polynomials.}
 \label{fig:ratio}
\end{figure}

As a further experimental proof of the above statement, one can look at Figure~\ref{fig:ratio}
that shows the ratio between the number of positive and negative real roots for a sample
of random polynomials. We have kept the number of samples low to show that there is 
still some floating around the value $1$, but it is quite clear that the 
problem of counting the roots is symmetric in the change of sign. 

\subsubsection{Variance on the Number of Positive Roots}

\begin{figure}[ht]
  \begin{center}
    \includegraphics[width=.9\linewidth]{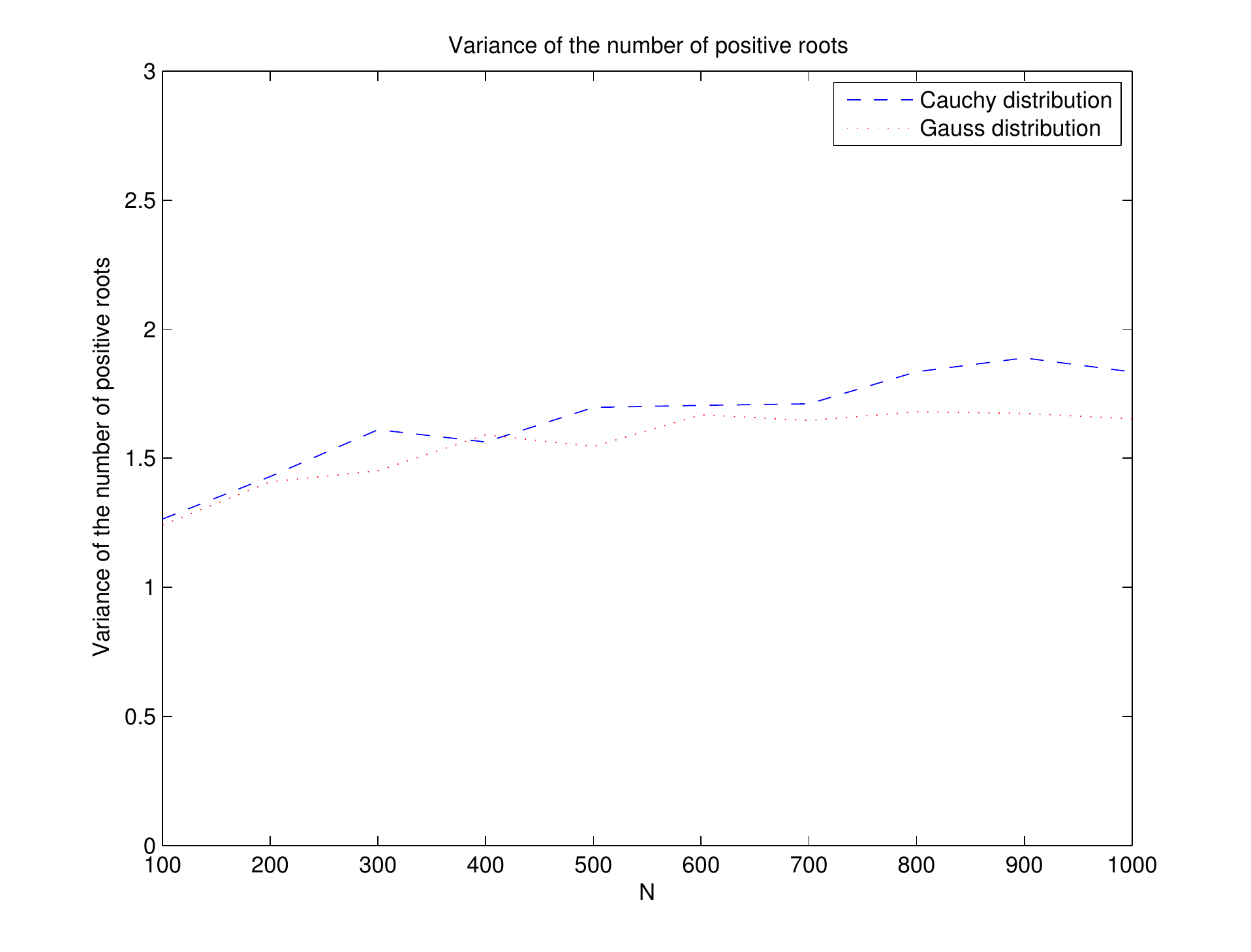}
  \end{center}
  \caption{Variance of the number of positive roots of a random polynomial
   with different random distribution of the coefficients.}
  \label{fig:variancepositive}
\end{figure}

Figure~\ref{fig:variancepositive} plots our experimentally
computed variance on the number of positive real roots
using $2000$ random polynomials for each sample point.
As far as we know, there are no theoretical estimates 
for the variance of the number of positive roots. 
Unfortunately, since the variance is increasing very slowly,
is not easy to guess its~asymptotic~behavior.

% \subsubsection{Histograms of the number of positive/negative/zero definite roots}

\subsection{Complex Roots}

A natural extension of the questions that we have tried to address in the previous section
is the analysis of the distribution of complex roots. We are considering real
random distributions so we expect to have a certain number of real roots and
some complex conjugate ones. 
The following present evidence about the 
distribution of the real and imaginary parts and on their modulus.

\subsubsection{Distribution of the Complex Roots}

Figures~\ref{fig:realdistcauchy}--\ref{fig:imagdistgauss} 
plot the approximate density of the distribution of the 
real and imaginary parts of the roots of Cauchy and Gauss polynomials, respectively. 

\begin{figure}[ht]
  \begin{center}
    \includegraphics[width=\linewidth]{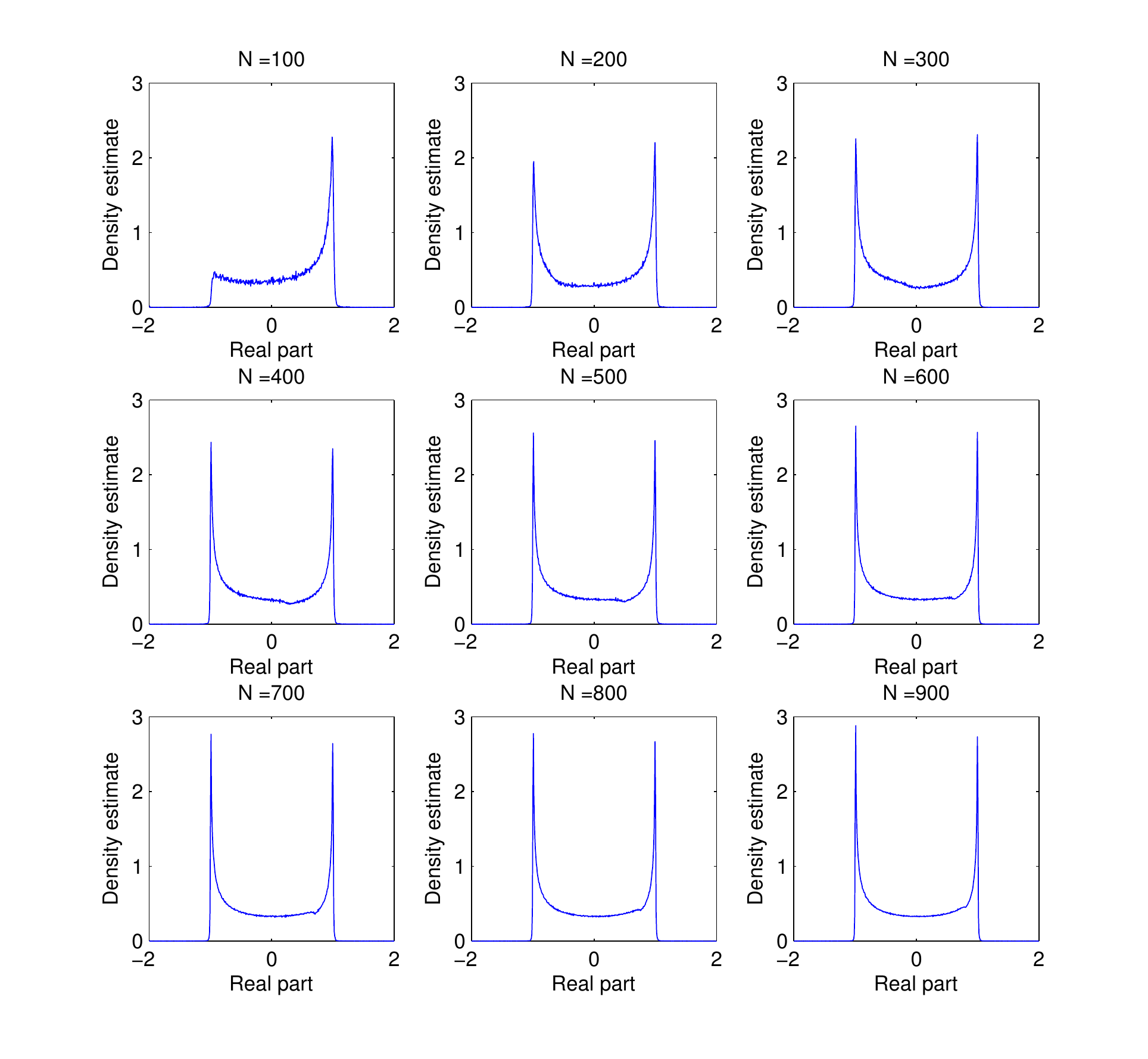}
  \end{center}
  \caption{Density of the distribution of the real part of the roots of random
    Cauchy polynomials for various degrees.}
  \label{fig:realdistcauchy}
\end{figure}

\begin{figure}[ht]
  \begin{center}
    \includegraphics[width=\linewidth]{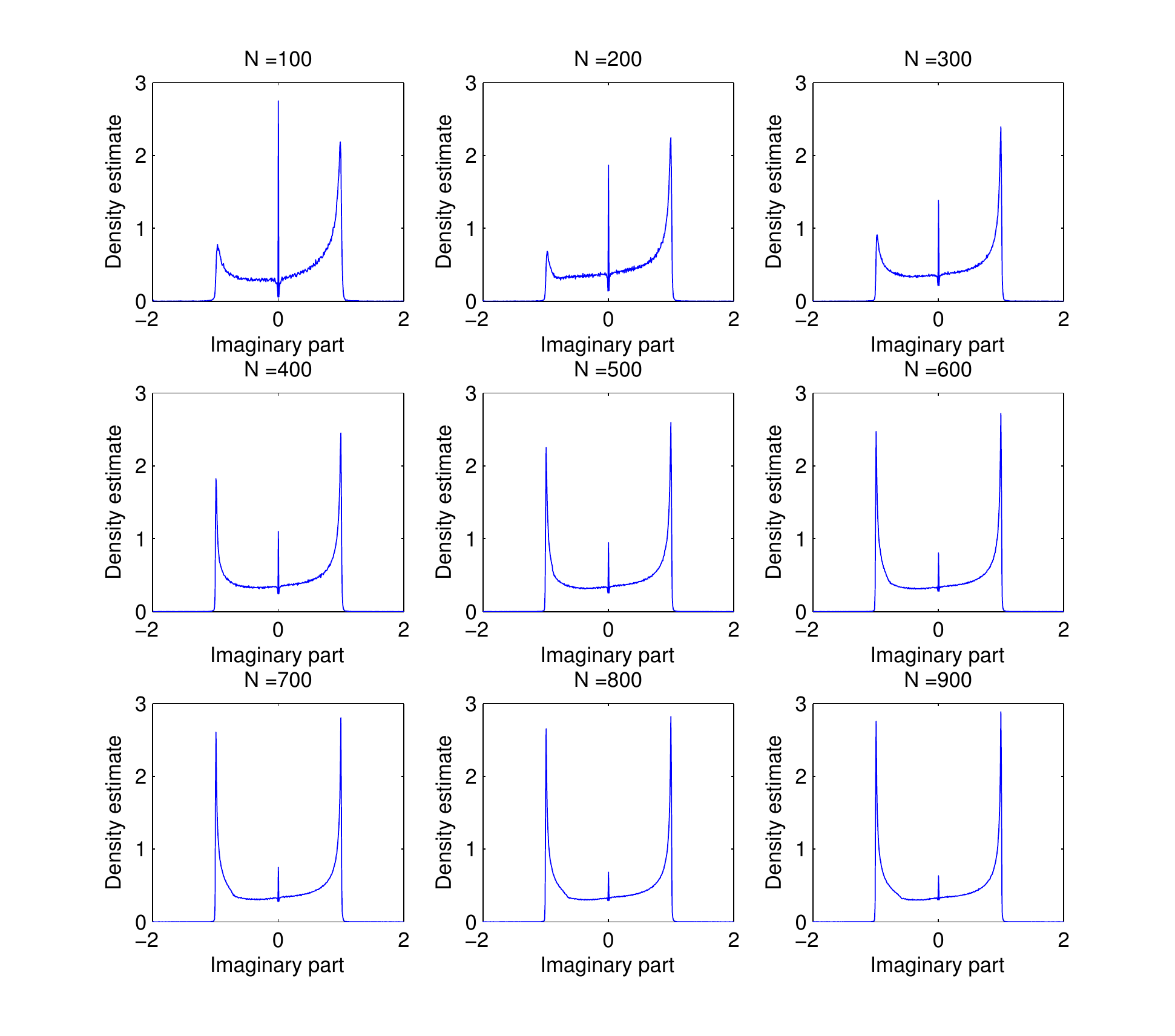}
  \end{center}
  \caption{Density of the distribution of the imaginary part of the roots of random
    Cauchy polynomials for various degrees.}
  \label{fig:imagdistcauchy}
\end{figure}

\begin{figure}[ht]
  \begin{center}
    \includegraphics[width=\linewidth]{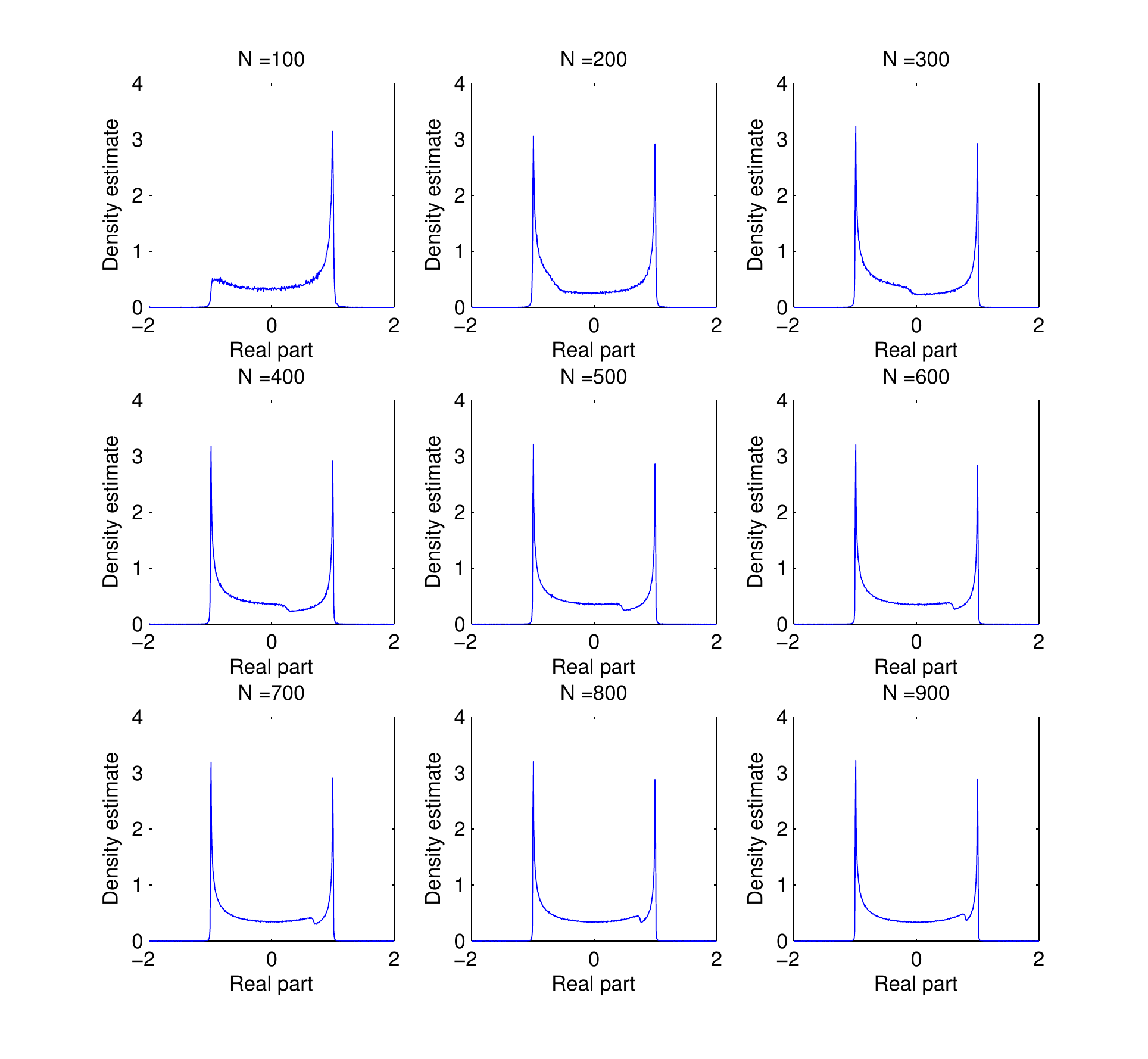}
  \end{center}
  \caption{Density of the distribution of the real part of the roots of random
    Gauss polynomials for various degrees.}
  \label{fig:realdistgauss}
\end{figure}

\begin{figure}[ht]
  \begin{center}
    \includegraphics[width=\linewidth]{imagdistcauchy}
  \end{center}
  \caption{Density of the distribution of the imaginary part of the roots of random
    Gauss polynomials for various degrees.}
  \label{fig:imagdistgauss}
\end{figure}

\subsubsection{Magnitude of the Roots}

\begin{figure}[ht]
  \centering
  \begin{subfigure}[b]{.45\textwidth} \centering
    \includegraphics[width=\linewidth]{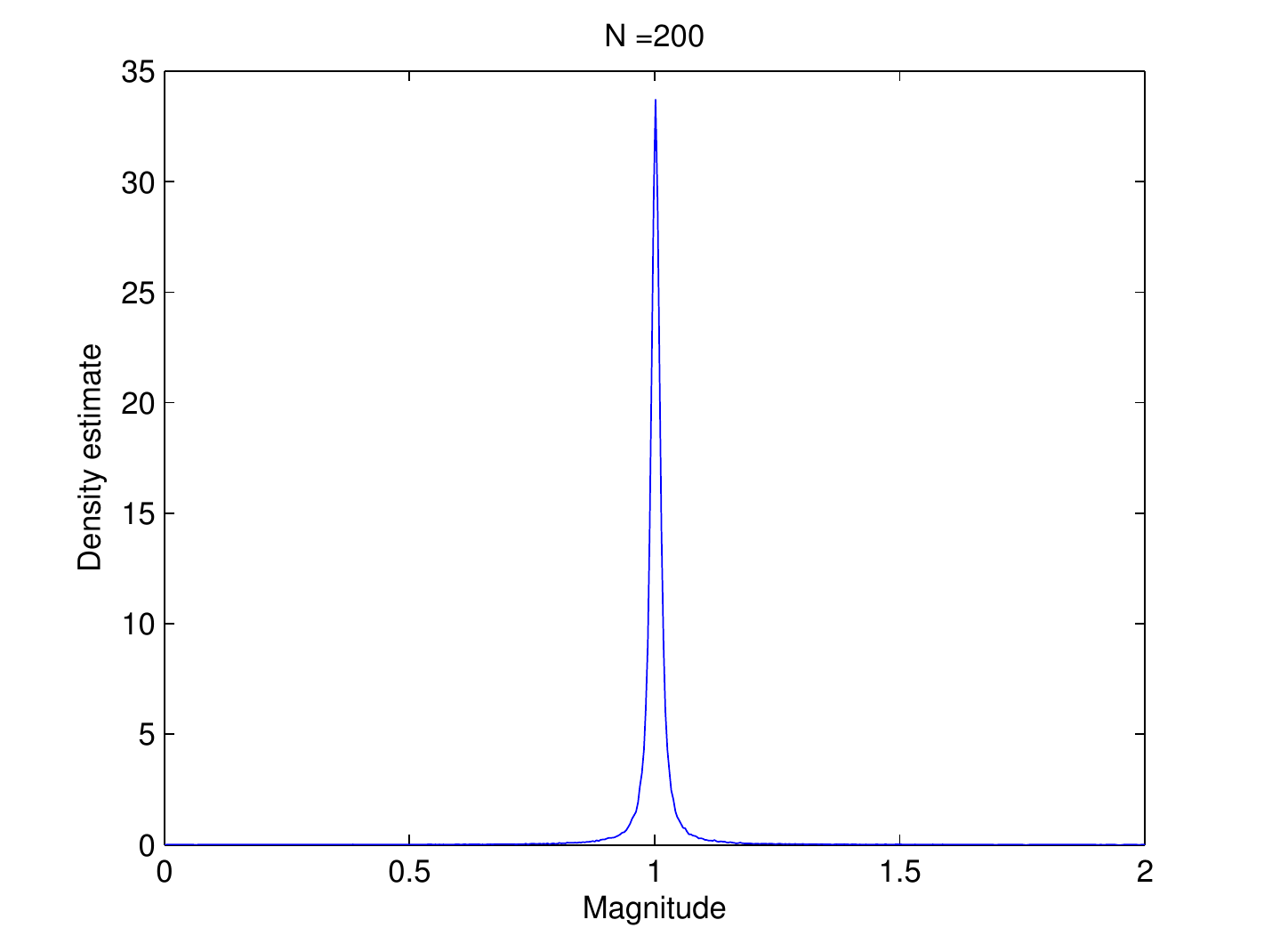}
    \caption{Experimental distribution of the magnitude of the roots of a 
    random Gauss polynomial.}
    \label{fig:gaussdistabs}
  \end{subfigure}
  \hspace{0.1in}
  \begin{subfigure}[b]{.45\textwidth} \centering
    \includegraphics[width=\linewidth]{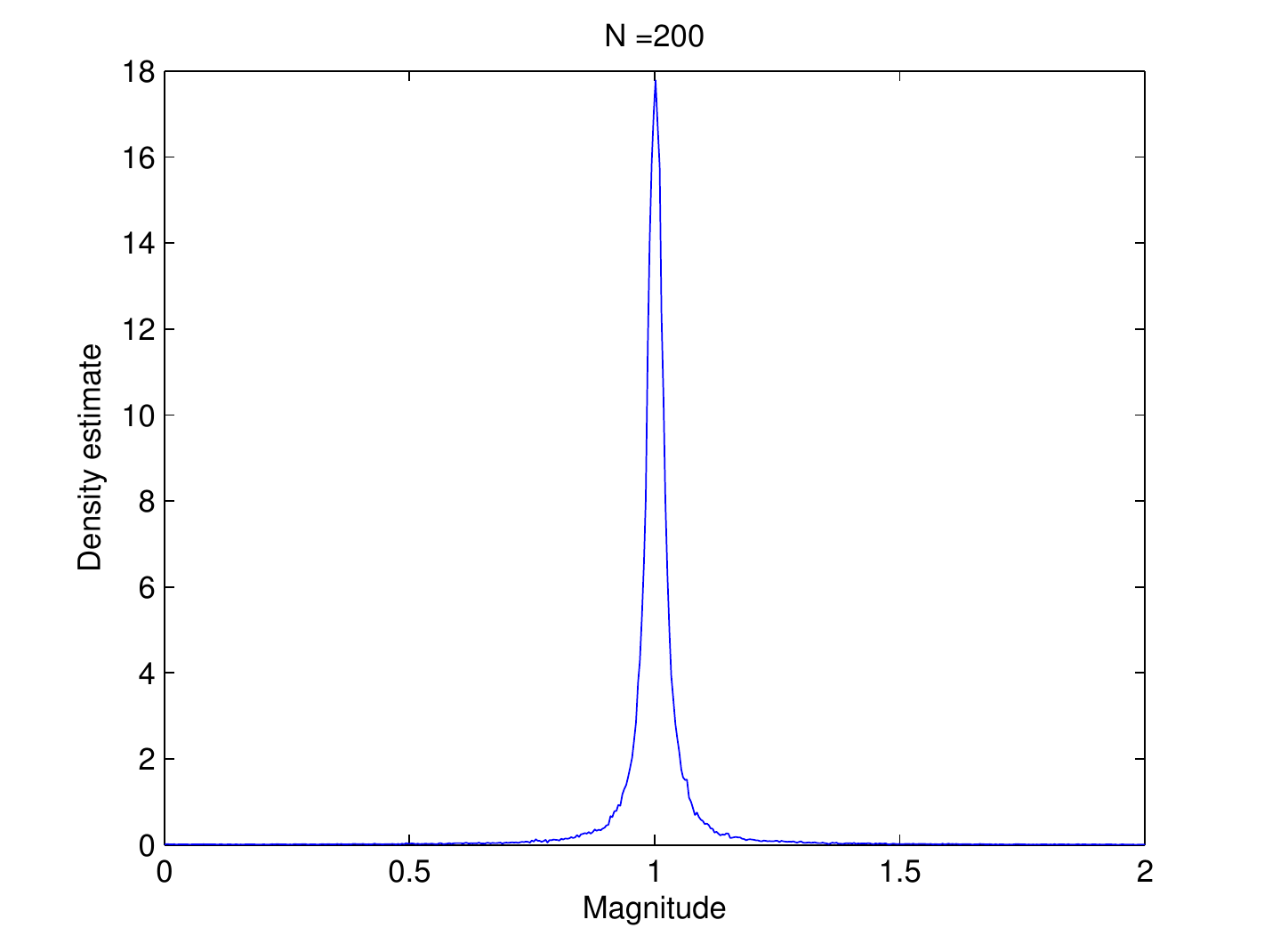}
    \caption{Experimental distribution of the magnitude of the roots of a 
      random Cauchy polynomial.}
    \label{fig:cauchydistabs}
  \end{subfigure} \hfill  
  
  \caption{Magnitude of the roots with different distributions.}
\end{figure}

We also analyze the distribution of the modulus 
of the complex roots. The results of our experiments are 
reported in Figure~\ref{fig:gaussdistabs} and in Figure~\ref{fig:cauchydistabs}. 
Only the case with $N=200$ is reported since
all our plots look very similar and the 
modulus of almost all the roots have modulus near $1$. 

\subsection{Stationary Points of Univariate Random Potential Energy Landscape}

\begin{figure}[ht] \centering
  \begin{subfigure}[t]{.45\textwidth} \centering
    \includegraphics[width=\linewidth]{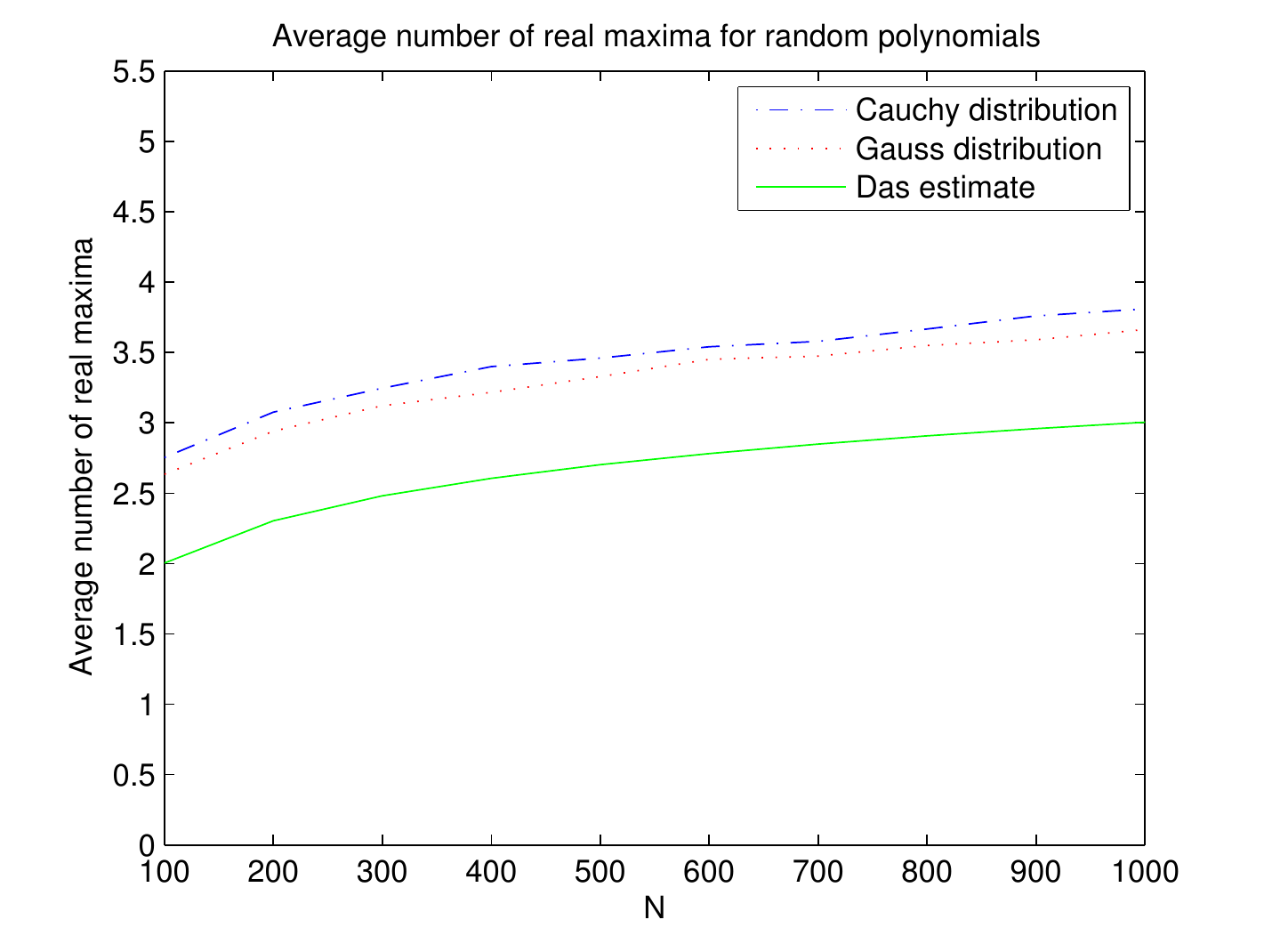}
    \caption{The mean number of real maxima for Cauchy and Gaussian random polynomials.
     The theoretical asymptotic that can be found in \cite{Farahmand:98} 
     is also reported.}
    \label{fig:cauchy_gauss_mean_maxima}
  \end{subfigure}
  \hspace{0.1in}
  \begin{subfigure}[t]{.45\textwidth} \centering
    \includegraphics[width=\linewidth]{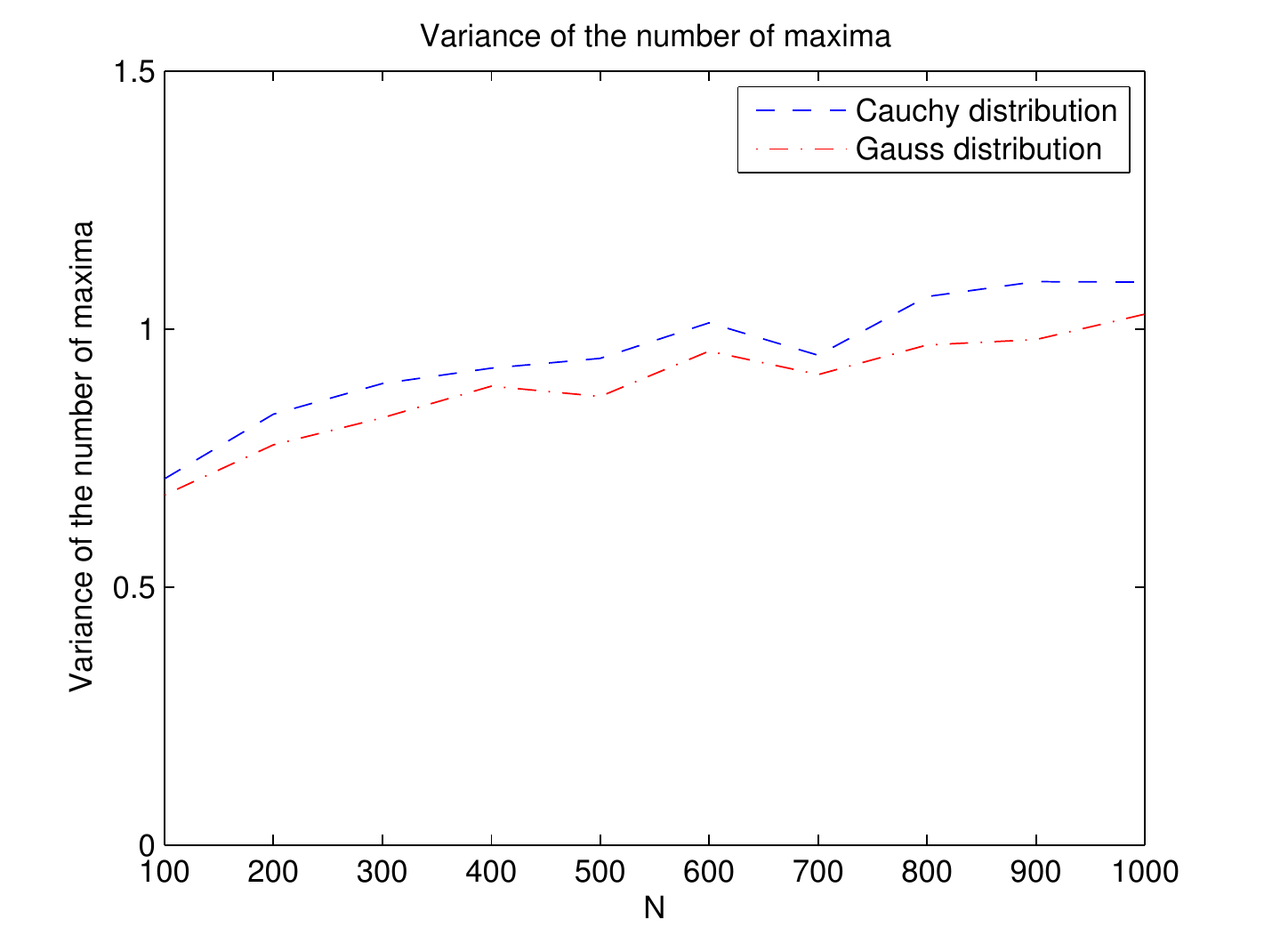}
    \caption{The variance of the number of real maxima for Gaussian and Cauchy
    polynomials.}
    \label{fig:maxima_variance}
  \end{subfigure}
  \caption{Experimental analysis of the distribution of the maxima.}
\end{figure}

In many areas of science and engineering, 
studying the surface defined by a 
multi-dimensional hypersurface described
by a multivariate nonlinear function, 
called potential energy function, 
is a fundamental problem -- called analyzing 
the potential energy landscape of the given system \cite{Wales:04}. 
In particular, a great deal of research is invested in finding stationary points of potential energy landscapes of different multivariate systems.
Finding stationary points of multivariate functions amounts to solving the system of equations obtained by all the first derivatives with 
respect to all the variables equated to zero. To obtain
statistical information about generic potential energy landscapes, 
it is customary to explore the potential energy landscapes of the random potentials, i.e., 
multivariate functions whose coefficients are randomly picked from a random distribution \cite{Greene:2013ida}.

Using our numerical setup, we mimic exploring the random potential energy landscapes of 
the univariate random potential $F(x)$, where
\begin{equation}
 F(x) = \sum_{i=0}^{D} a_{i} x^{i}.
\end{equation}
In other words, we solve $f(x) = F'(x) = 0$. The number of maxima can be obtained
by just dividing the number of real zeros of $F'(x)$ by $2$, since the number
of inflection points is of measure $0$ (see \cite{Farahmand:98}). 
%\textit{@Leonardo: Are you sure the no. of random instances with at least one inflection %point is of measure zero?!}

\subsubsection{Average number of real critical points}

With our numerical setting we have performed a simulation on $2000$
random polynomials to check the theoretical estimates in \cite{Farahmand:98}. 
The results can be seen in Figure~\ref{fig:cauchy_gauss_mean_maxima}. 

\subsubsection{Variance of the number of maxima}
With the same data obtained above we can also estimate the variance of 
the number of zeros, which is summarized in Figure~\ref{fig:maxima_variance}.  We note that the analysis for the minima is totally symmetric. 

\subsubsection{Average number of minima at which $F(x)> 0$}
We now measure the minima for which $F(x)>0$. 
In a string theory set up, such minima are called {\em de~Sitter minima}. 
In this set up, $F(x)$ evaluated at a minimum corresponds to the cosmological constant. Since the observations have revealed that the 
cosmological constant is always positive definite, the search is then restricted to minima with $F(x)>0$.

It is easy to see that in our case if $F'(x) = 0$ and $F''(x) > 0$ then
by defining $G(x) := F(-x)$ we have that $G'(x) = 0$ and $G''(x) > 0$. In 
particular this implies that the mean number of minima points where $F(x) > 0$ are
bound to be equal to the mean number of minima points where $F(x)$ is negative due
to symmetry reasons. So we can expect that the number of positive minima will 
be half of the negative ones. 

The only question that we can ask is what can be said about the variance
of this number. 

The results of this analysis are reported in Figure~\ref{fig:variancepositivemaxima}. 
Similarly to the variance of positive roots,
we see that the variance is slowly increasing with the degree and it is 
also slowly converging with the size~of~the~sample. 

\begin{figure}[ht]
  \begin{center}
    \includegraphics[width=\linewidth]{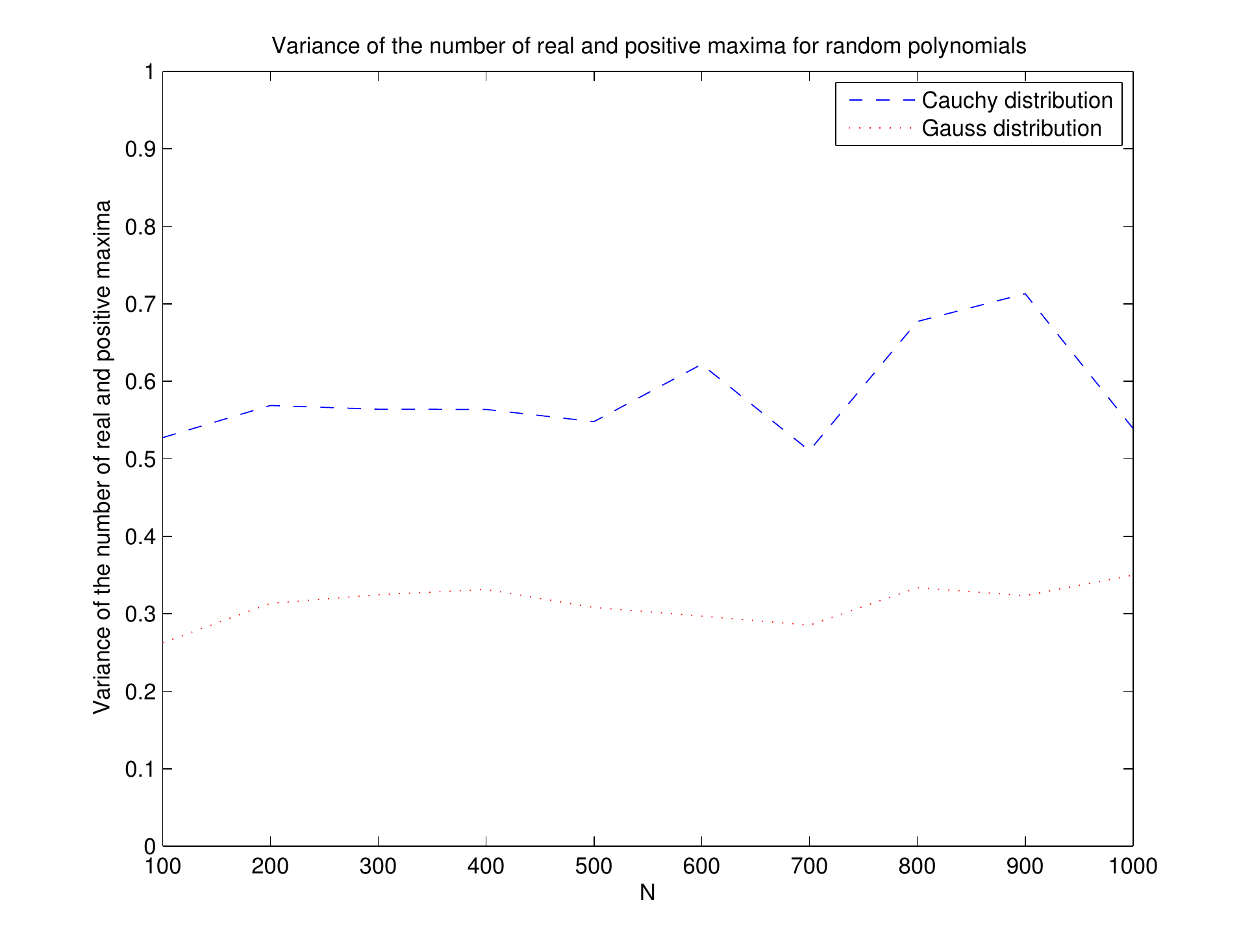}
  \end{center}
  \caption{Variance of the number of positive maxima of the random polynomials.}
  \label{fig:variancepositivemaxima}
\end{figure}

\subsubsection{The Histogram of $F''(x)$ at the Real Roots}
For multivariate random potentials, an important physical quantity is the eigenvalues of the Hessian matrix evaulated at the 
real stationary points. The distribution of the eigenvalues of random multivariate potentials is well studied in theoretical and statistical physics.
It is shown that in certain cases the distribution follows Wigner's semicircle law.
Here, for the univariate random potential case, we want to analyze the distribution of the evaluations of $F''(x)$ on the
real roots. 
Due to the very high oscillations of the value of $F''(x)$ given the high degree
of the polynomials, we were able to easily analyze only the case of degree $100$. 
We decided to numerically estimate the distribution of $\log \lvert F''(x) \rvert$
instead of estimating $F''(x)$ to obtain a much more meaningful plot. 

The expermiental results obtained for $N=100$ are reported in Figures~\ref{fig:cauchy_density_fpp} and \ref{fig:gauss_density_fpp}.

\begin{figure}[ht] \centering
  \begin{subfigure}[t]{.45\textwidth} \centering
    \includegraphics[width=\linewidth]{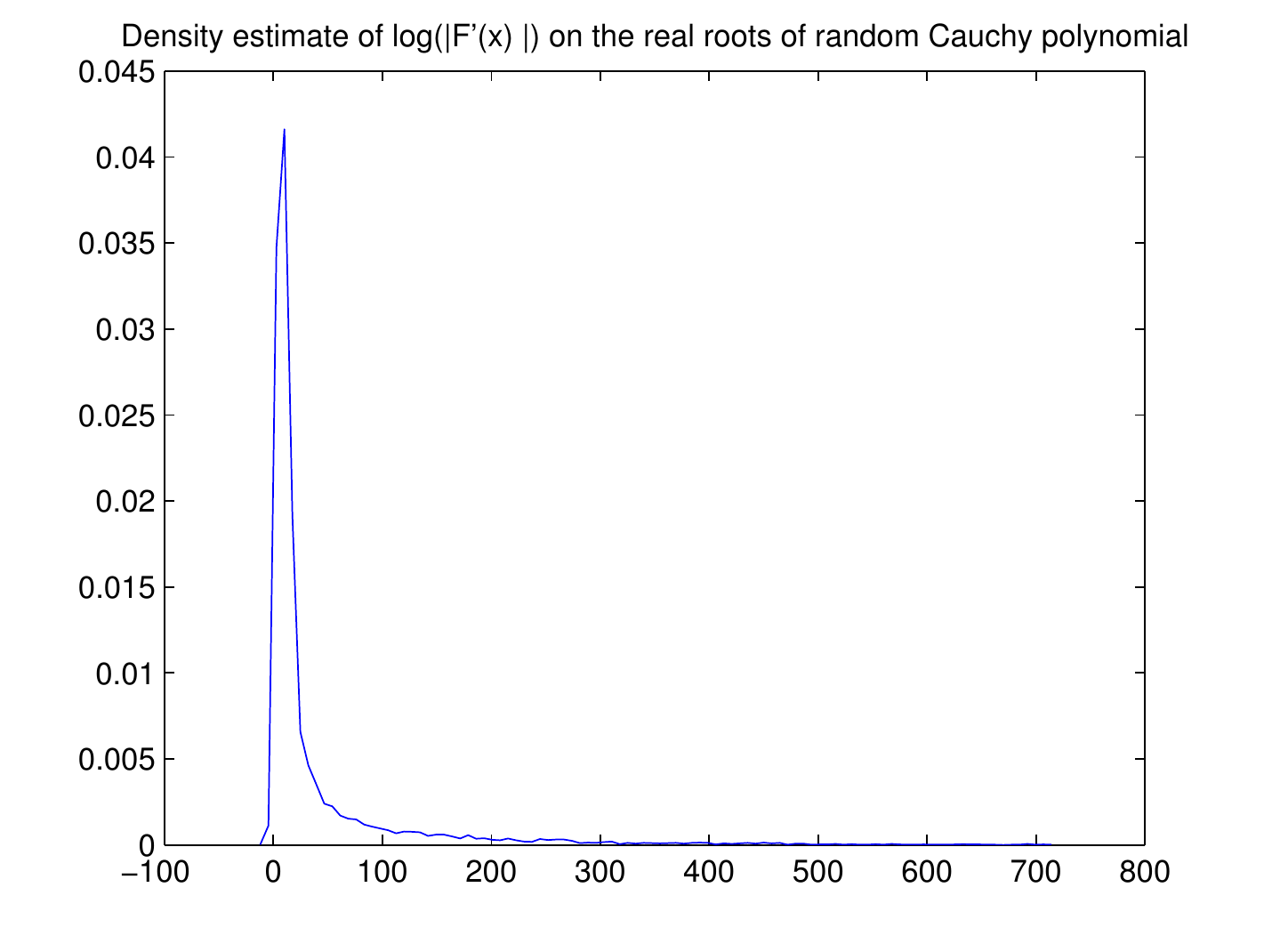}
    \caption{Cauchy polynomials.}
    \label{fig:cauchy_density_fpp}
  \end{subfigure}
  \begin{subfigure}[t]{.45\textwidth} \centering
    \includegraphics[width=\linewidth]{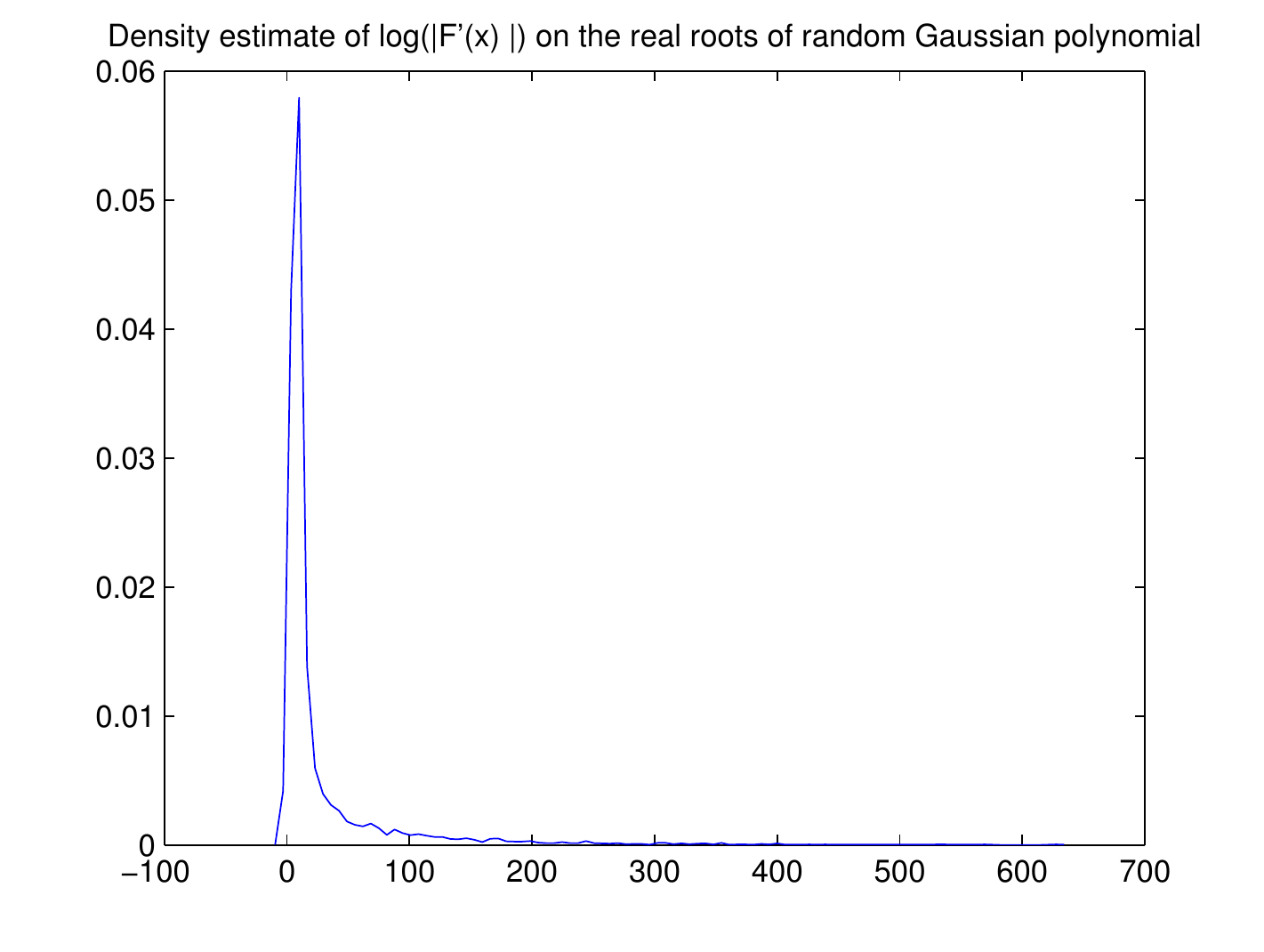}
    \caption{Gauss polynomials}
    \label{fig:gauss_density_fpp}
  \end{subfigure}
  \caption{Experimental analysis of the distribution of the evaulation of $\log \lvert F''(x) \rvert$ on the real roots of $F(x)$.}
\end{figure}

%\begin{enumerate}
%\item Average no. of minima (i.e., the no. of real solutions at which $F''(x)\ge 0$) %(Analytical expression for normal distribution: 
%http://arxiv.org/abs/math/0702360)
%\item Variance of the no. of minima
%\item Histogram of the global minima
%\item Histogram of all the minima
%\item Average number of minima at which $F(x)> 0$, $F(x)=0$ and $F(x)< 0$ (a string theory %application)
%\item Variance of the number of minima at which $F(x)> 0$, $F(x)=0$ and $F(x)< 0$
%\item Histogram of values of $F''(x)$. Is it a semicircle or double-peaked?
%\item Average number of points of inflection and points of undulation
%\item Variance of the number of points of inflection and points of undulation
%\item Average number of points of inflection and points of undulation
%\end{enumerate}

\section{Conclusion and Outlook} \label{sec:conclusions}
Finding roots of high-degree univariate polynomials with coefficients chosen from particular random distributions, 
though a very important task that is related to advances in many different areas
of science, mathematics and engineering, is a highly nontrivial task. Though there are several numerical root-finding methods available,
the numerical approximates found by these methods may turn out to be in the linear or even worse in the chaotic convergence region of the nearby
exact root. In this work, we attempted to find certified roots of such 
high-degree univariate polynomials.  In particular, we
used two different certification methods to certify if the numerical approximates are within the quadratic convergence region, 
namely based on Smale's $\alpha$-theory, a local method, 
and Gerschgorin's theorem, a global method.
We first find all the potential numerical approximates of a univariate polynomial using traditional methods 
such as the companion matrix method as implemented in
{\sc Matlab}'s {\tt roots()} command, or the multiprecision 
method implemented in MPSolve. Then, we certify if the numerical approximates are certifiable in the above mentioned sense, 
using both the certification methods via {\tt alphaCertified} and MPSolve.
We compared these methods based on the local conditioning 
constant $\gamma$ from $\alpha$-theory with $\gamma_M$ arising
from Gerschgorin's theorem.
In our experiment, the average of $\gamma$ increases 
exponentially with the degree whereas $\gamma_M$ increases polynomially
in the degree.

One advantage of Smale's $\alpha$-theory is that it is a local method.
That is, it can be performed using data computed at one point.
This allows for a quite independent and self-contained analysis
of the method.  On the other hand, relying 
on universal, worst-case methods often causes the bounds to 
grow very fast.  Global methods, which are based on
the minimum distance between the roots, can alleviate
the fast growth at the expense of computing all of the roots.

We emphasize that, given a high-degree polynomial,
multiprecision computations may be needed to perform the computations.
This is also true in cases where only floating 
point output ($16$ digits) is desired but, due to the ill-conditioning of the problem,  an intermediate algorithm implemented in multiprecision might be able to obtain all the $16$ digits. 

We also provided detailed experiments from the Cauchy case,
which is less studied than the Gaussian case. 
Our experiments suggest that:
\begin{enumerate}
  \item the available theoretical results are easily verifiable experimentally, as we have shown, for example, in Figure~\ref{fig:cauchy_gauss_mean} for the
    expected value for the number of real roots;
  \item there is some additional difficulties in the theoretical analysis
    of Cauchy random polynomials given by the fact that the Cauchy distribution does not have moments of any finite order, e.g., it is not possible to apply Maslova's theorem, but experimental results
    show that the variance of the number of real roots has a behaviour compatible with the Maslova's estimate.
\end{enumerate}
In conclusion, these results suggest a generalization to the Cauchy case 
is possible.

\end{document}